\theoremstyle{plain}
\newtheorem{thm}{Theorem}
\newtheorem{prop}[thm]{Proposition}
\newtheorem{cor}[thm]{Corollary}
\newtheorem{defin}[thm]{Definition}
\newtheorem{ex}[thm]{Example}
\newtheorem{rmk}[thm]{Remark}
\newcommand*{\R}{\mathbb{R}}
\newcommand*{\y}{\mathbf{y}}
\newcommand*{\z}{\mathbf{z}}
\newcommand*{\uu}{\mathbf{u}}
\newcommand*{\w}{\mathbf{w}}
\renewcommand*{\i}{\mathbf{1}}
\title{A note on Steinerberger's curvature for graphs}
\author{David Cushing, Supanat Kamtue, Erin Law, \\Shiping Liu, Florentin M\"unch, Norbert Peyerimhoff}
\date{\today}
\begin{document}

\maketitle

\abstract{In this note, we provide Steinerberger curvature formulas for block graphs, discuss curvature relations between two graphs and the graph obtained by connecting them via a bridge, and show that self-centered Bonnet-Myers sharp graphs are precisely those which are antipodal. We also discuss similarities and differences between Steinerberger and Ollivier Ricci curvature results.}

\tableofcontents

\section{Introduction}

Stefan Steinerberger introduced a non-local curvature notion for the vertices of a finite combinatorial graph $G= (V,E)$ with vertex set $V$ and edge set $E$ in his 2022 article \cite{St-23}. The idea is to consider solutions $K$ of the equation
\begin{equation} \label{eq:K-defin}
D K = n \cdot \mathbf{1}_n, 
\end{equation}
where the vertices of $G$ are enumerated by
$$ V = \{v_1,v_2,\dots,v_n\}, $$
$D = (d(v_i,v_j))_{i,j=1}^n$ is the distance matrix of $G$, and $K$ and $\mathbf{1}_n$ are both column vectors of length $n$ with all entries of $\mathbf{1}_n$ being equal to $1$. If equation \eqref{eq:K-defin} has a unique solution $K \in \mathbb{R}^n$, the curvature of vertex $v_i$ is then the $i$-th entry $K_i$ of the vector $K$. If there are multiple vectors $K$ solving equation \eqref{eq:K-defin}, the vector $K$ defining the vertex curvatures is one whose smallest entry is largest, that is, one for which $\min_i K_i$ is maximal. Finally, if there are no solutions for equation \eqref{eq:K-defin},
Steinerberger defines this new curvature with the help of the Moore-Penrose pseudoinverse $D^\dagger$ of $D$ (which exists always) by setting
$$ K = n \cdot D^\dagger \mathbf{1}_n. $$
Interestingly, the ``total curvature'', given by 
$$ \Vert K \Vert_{\ell^1} = \sum_i K_i $$
for any solution $K \in \mathbb{R}_{\ge 0}^n$ of \eqref{eq:K-defin}, is an invariant and does not depend on the choice of $K$ (see Proposition 3 in \cite{St-23}). Another main result of Steinerberger (see Theorem 1 in \cite{St-23}) is a Bonnet-Myers Theorem, stating for non-negatively curved finite graphs $G$ admitting solutions for \eqref{eq:K-defin} that
$$ \rm{diam}(G) \le \frac{2n}{\Vert K \Vert_{\ell_1}} \le \frac{2}{K_0}, $$
where $K_0 = \min_i K_i \ge 0$. Moreover, Bonnet-Myers sharp graphs, that is, graphs admitting solutions for \eqref{eq:K-defin} and satisfying
$$ K_0 = \min_i K_i = \frac{2}{\rm{diam}(G)} $$
must have constant Steinerberger curvature $K_i = K_0$ for all $i$. (Note that $G$ has constant curvature if $D \mathbf{1}_n$ is a constant vector, and that all vertex transitive graphs $G$ have this property.) These and other results are derived in \cite{St-23} from a particular distance balancing property formulated as Theorem 4 (Minimax theorem). An alternative proof of the invariance of the total curvature of graphs with non-negative Steinerberger curvature without the use of the Minimax theorem was given in \cite[Theorem 3]{ChT-23}. For the reader's convenience, we provide at the beginning of Section \ref{sec:bonnetmyers} a direct proof of the Bonnet-Myers inequality and its implication of constant curvature without using the Minimax theorem. Regarding the Steinerberger curvature of the vertices of a graph $G=(V,E)$, we will use interchangeably the entries $K_i$ of the curvature vector $K$ if the vertices $V$ are enumerated, or the notation $K(x)$ for the Steinerberger curvature of an arbitrary vertex $x \in V$.

One of the appeals of Steinerberger curvature is that it has a very simple definition, is easy to compute, and is similar to Ollivier Ricci curvature or Lin-Lu-Yau curvature in various examples. See \cite{Ol-09} and \cite{LLY-11} for their definitions. In contrast to the latter two curvatures, it is a non-local curvature notion. Other curvatures related to Steinerberger's curvature were henceforth developed and investigated in \cite{DL-22} and in \cite{DOS-24}.   

Note that there is a freely accessible interactive app \cite{CKLLS-22} for the computation of Steinerberger curvature (and various other graph curvatures) at

\medskip

\begin{center}
\url{https://www.mas.ncl.ac.uk/graph-curvature/}
\end{center}

\medskip

\noindent
We encourage readers to use this tool to discover various other interesting properties of graph curvatures. 

Let us now briefly discuss the results about Steinerberger curvature presented in this note. Explicit formulas for Steinerberger curvatures of block graphs are given in Theorem \ref{thm:curvblockgraphs} in Section \ref{sec:treesandblockgraphs}. In Section \ref{sec:bridges}, we present some interesting Steinerberger curvature properties of two graphs connected via a single edge (bridge). The main results there are Theorems \ref{thm:bertie-blossom}, \ref{thm:BB_ext2} and \ref{thm:BB_ext}. 
We like to emphasize that curvature properties of graphs connected by a bridge were already studied earlier in \cite{ChT-23} (bridging and cutting graphs). In our formulation, we focus on the explicit curvature formulas as opposed to the preservance of curvature non-negativity, which  
was the main focus in \cite[Theorems 2 and 4]{ChT-23}. Section \ref{sec:problems} lists some stimulating problems related to non-positive Steinerberger curvature. The main result in the final Section \ref{sec:bonnetmyers} 
is concerned with Bonnet-Myers sharpness. It was already mentioned in \cite{St-23} that examples of Bonnet-Myers sharp graphs are even cycles, hypercubes and the Johnson graphs $J_{2n,n}$. A proof that Cartesian products of Bonnet-Myers sharp graphs are again Bonnet-Myers sharp was given in \cite[Proposition in Section 2.1]{ChT-23}. Our Theorem \ref{thm:BM-equiv} states that self-centered Bonnet-Myers sharp graphs are precisely the antipodal graphs.  
We also an inequality between the average distance of a non-negatively curved graph and its curvature (Proposition \ref{prop:avdistcurv}). 
This last section provides also some context for these result by mentioning related results based on Ollivier Ricci curvature instead of Steinerberger curvature.

\section{Block graphs}
\label{sec:treesandblockgraphs}

Algebraic investigations of distance matrices of particular graphs have been carried out over time by various authors. One of the first papers in this direction was Graham/Lov\'asz \cite{GL-78}, which contains a formula for the inverse of the distance matrix of a tree. Of particular relevance in this section 
is the paper by Bapat/Sivasubramanian \cite{BS-11}, which is concerned with 
the inverses of the distance matrices of block graphs. Note that trees are special cases of block graphs.

Let us start by giving the definition of block graphs. A maximal $2$-connected subgraph of a graph is called a \emph{block} of the graph. A \emph{block graph} is a connected graph all of whose blocks are cliques. See \cite[Figure 1]{BS-11} for an example, whose Steinerberger curvature is discussed in Example \ref{example:blockgraph} below. Trees are special cases of block graphs whose blocks are precisely the edges. Block graphs have the following Steinerberger curvatures, which is a direct consequence of Lemma 2 in \cite{BS-11}.

\begin{thm} \label{thm:curvblockgraphs}
Let $G=(V,E)$ be a block graph consisting of $r$ blocks, each of size $(P_i)_{i=1}^r$ and
$$ \lambda_G = \sum_{i=1}^r \frac{P_i-1}{P_i}. $$
Let $x \in V$ be contained in $s$ blocks labeled by $i_1,\dots,i_s$. Let
$$ \beta_x = \left( \sum_{j=1}^s \frac{1}{P_{i_j}}\right) -(s-1). $$
Then we have
$$ K(x) = \frac{|V| \beta_x}{\lambda_G}. $$
\end{thm}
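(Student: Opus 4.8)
The plan is to combine the invertibility of the distance matrix $D$ of a block graph (established by Bapat and Sivasubramanian) with their explicit formula for $D^{-1}$. Since $D$ is invertible, equation \eqref{eq:K-defin} has the unique solution $K = n D^{-1}\mathbf{1}_n$, which by the paper's convention is precisely the Steinerberger curvature vector (and also agrees with the pseudoinverse prescription). Thus everything reduces to evaluating $D^{-1}\mathbf{1}_n$.

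First I would recall Lemma 2 of \cite{BS-11} and rewrite it in the present notation. Its content is that
$$ D^{-1} = -\tfrac{1}{2}\,\widetilde L + \tfrac{1}{\lambda_G}\,\beta\beta^{\top}, $$
where $\beta=(\beta_x)_{x\in V}$ is the vector of the quantities $\beta_x$ from the statement, and $\widetilde L=\sum_{i=1}^r \tfrac{2}{P_i}L_{B_i}$ is a nonnegative combination of the clique Laplacians $L_{B_i}$ of the individual blocks (each extended by zero off its block). The decisive structural feature, and the only one I use below, is that $\widetilde L$ is a genuine graph Laplacian, so $\widetilde L\,\mathbf{1}_n=0$. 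To make sure the normalisations match the statement, I would verify the rank-one clique $K_{P_1}$ directly: there $D=J-I$, $D^{-1}=\tfrac{1}{P_1-1}J-I$, $\beta\equiv\tfrac{1}{P_1}$ and $\lambda_G=\tfrac{P_1-1}{P_1}$, and one checks that the displayed formula reproduces $D^{-1}$, pinning down the scalar $\tfrac{1}{\lambda_G}$ in front of the rank-one term.

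With the formula in hand, the computation is short. Applying it to $\mathbf{1}_n$, the Laplacian term dies because $\widetilde L\,\mathbf{1}_n=0$, leaving
$$ D^{-1}\mathbf{1}_n=\tfrac{1}{\lambda_G}\bigl(\beta^{\top}\mathbf{1}_n\bigr)\,\beta. $$
It then remains to evaluate $\beta^{\top}\mathbf{1}_n=\sum_{x\in V}\beta_x$. Exchanging the order of summation gives $\sum_x\sum_{i:\,x\in B_i}\tfrac{1}{P_i}=\sum_i P_i\cdot\tfrac{1}{P_i}=r$, while $\sum_x(s_x-1)=\bigl(\sum_i P_i\bigr)-|V|$, where $s_x$ is the number of blocks through $x$. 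Using the block-tree identity $\sum_{i=1}^r(P_i-1)=|V|-1$ (each block of a block graph attaches at a single cut vertex and contributes $P_i-1$ fresh vertices), this yields $\sum_x(s_x-1)=r-1$, whence $\sum_x\beta_x=r-(r-1)=1$. Therefore $D^{-1}\mathbf{1}_n=\tfrac{1}{\lambda_G}\beta$ and $K=nD^{-1}\mathbf{1}_n=\tfrac{|V|}{\lambda_G}\beta$, i.e. $K(x)=\tfrac{|V|\,\beta_x}{\lambda_G}$, as claimed.

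I expect the only real work to be the faithful translation of Lemma 2 of \cite{BS-11} into the present language—in particular, recognising that the rank-one correction is governed exactly by the vector $\beta$ and the scalar $\lambda_G^{-1}$, and that the remaining matrix is a Laplacian annihilating $\mathbf{1}_n$. Reassuringly, the final curvature is insensitive to the precise normalisation of the two terms, since only $\widetilde L\,\mathbf{1}_n=0$ and the value $\beta^{\top}\mathbf{1}_n=1$ enter. Once the identification is secured, the rest is the elementary double-counting above, whose engine is the identity $\sum_i(P_i-1)=|V|-1$; no genuine obstacle remains beyond bookkeeping of the normalisation constants.
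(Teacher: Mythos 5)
Your argument is correct, but it reaches the conclusion by a genuinely different route from the paper. The paper's proof is a two-line citation: it invokes Theorem 3 of \cite{BS-11} for the invertibility of $D$ and then Lemma 2 of \cite{BS-11}, which states \emph{directly} the identity $D\beta = \lambda_G \mathbf{1}_n$; dividing by $\lambda_G$ and multiplying by $n$ gives $K = \frac{n}{\lambda_G}\beta$ immediately. You instead start from the explicit inverse formula $D^{-1} = -\frac{1}{2}\widetilde{L} + \frac{1}{\lambda_G}\beta\beta^{\top}$ (which in \cite{BS-11} is the content of Theorem 3, not of Lemma 2 --- your attribution of the lemma number is off, though the mathematics you quote is genuinely there), and then you must do two extra pieces of work that the paper's route avoids: observing that the Laplacian part annihilates $\mathbf{1}_n$, and the double-counting argument showing $\beta^{\top}\mathbf{1}_n = 1$ via the block-tree identity $\sum_i (P_i-1) = |V|-1$. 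Both of these are correct (and your clique sanity check does pin down the normalisation of the rank-one term), so the proof goes through; what your detour buys is a self-contained derivation of the eigen-identity $D^{-1}\mathbf{1}_n = \frac{1}{\lambda_G}\beta$ from the structure of $D^{-1}$, plus the pleasant byproduct $\sum_x \beta_x = 1$, i.e.\ that the total Steinerberger curvature of a block graph equals $|V|/\lambda_G$. What the paper's route buys is brevity: the tailored Lemma 2 of \cite{BS-11} makes all of your bookkeeping unnecessary.
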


\begin{proof} 
    Let $n= |V|$ and $D$ be the distance matrix of $G$. We use the same notation as in \cite{BS-11}. Theorem 3 in \cite{BS-11} tells us that $D$ is invertible. (In fact, it also provides an explicit formula for $D^{-1}$.)  Lemma 2 in \cite{BS-11} provides us with the identity
    $$ D \beta = \lambda_G {\bf{1}}_n. $$
    Therefore, the unique Steinerberger curvature vector of $G$ is given by
    $$ K = \frac{n}{\lambda_G} \beta. $$
\end{proof}

\begin{ex}\label{example:blockgraph}
Let $G=(V,E)$ be the graph with $11=|V|$ vertices, as illustrated in Figure \ref{fig:blockgraph}. This example was presented and analysed in \cite[p. 1394]{BS-11}. We have $4$ blocks and 
$$ \lambda_G = \frac{3}{4} + \frac{1}{2} + \frac{4}{5} + \frac{2}{3} = \frac{163}{60}. $$
\begin{figure}[h!]
\begin{center}
\includegraphics[width=0.8\textwidth]{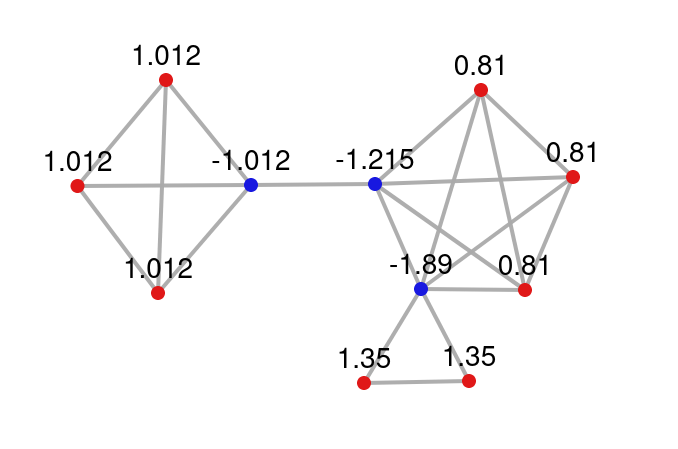}
\end{center}
\caption{Steinerberger curvature of the block graph illustrated in Figure 1 of \cite{BS-11}}
\label{fig:blockgraph}
\end{figure}
Denoting the joint vertex of the $K_5$ block and the $K_3$ block by $x$, we obtain
$$ \beta_x = \left( \frac{1}{5} + \frac{1}{3} \right) - 1 = - \frac{7}{15}. $$
This implies that $x$ has the following Steinerberger curvature:
$$ K(x) = \frac{|V|\beta_x}{\lambda_G} = -11 \cdot \frac{7}{15} \cdot \frac{60}{163} = -1.8896\dots $$
in accordance with the value given in Figure \ref{fig:blockgraph}.
\end{ex}

\begin{rmk}
The formula for Steinerberger curvature of vertices in a tree is already implicitely stated in \cite[Section 2.2]{ChT-23}, when they refer to Bapat's work \cite{Ba-10}. Steinerberger curvature of tree vertices can be also easily derived from the above result about block graphs:

For a tree $G=(V,E)$, we have $|V|-1$ blocks of size $P_i=2$ (edges). Therefore, 
$$ \lambda_G = \frac{1}{2} (|V|-1). $$
Moreover, for $x \in V$, $s$ is equal to the degree $d_x$ of the vertex $x$, and
$$ \beta_x= \frac{d_x}{2} - (d_x-1) =1 - \frac{d_x}{2}. $$
This implies that
$$ K(x) = \frac{|V| (1-d_x/2)}{(|V|-1)/2} = \frac{|V|}{|V|-1} (2-d_x). $$
\end{rmk}

\section{Bridges}
\label{sec:bridges}

In this section we discuss some amusing curvature relations between two graphs and the graph obtained by connecting them via a bridge. There is overlap with the earlier paper \cite{ChT-23} by Chen and Tsui. While Theorems 2 and 4 in \cite{ChT-23} focus on non-negativity properties of Steinerberger curvature, we consider explicit curvature relations, many of which can be extracted from the proofs given in \cite{ChT-23}. Hence we use their notation and proof techniques in explaining our deductions.

Before stating our first result, we introduce the total curvature of a subset of the vertices.
For a given finite combinatorial graph $G=(V,E)$ with Steinerberger curvature $K: V \to \mathbb{R}$, the \emph{total curvature} of a subset $W \subset V$ is defined as $K(W) := \sum_{w \in W} K(w)$. 

\begin{thm} \label{thm:bertie-blossom}
  Let $\{u,v\} \in E$ be a bridge of a finite connected graph $G=(V,E)$ separating $G$ into two components $G_1$ and $G_2$ with vertex sets $V_1$ 
  and $V_2$ 
  and $u \in V_1$ and $v \in V_2$. 
  Assume that $G$ admits a unique solution for \eqref{eq:K-defin} and let $K_G: V \to \R$ denote its Steinerberger curvature. Then the total curvature $K_G(V_1) = \sum_{x \in V_1} K_G(x)$ of the component $G_1$ agrees with the total curvature $K_G(V_2) = \sum_{y \in V_2} K_G(y)$ of the component $G_2$.
\end{thm}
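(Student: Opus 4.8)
The plan is to exploit the block structure that the bridge $\{u,v\}$ imposes on the distance matrix $D$ and to reduce the claim to a pair of scalar identities. Order the vertices so that those of $V_1$ come first and those of $V_2$ second, and write
$$ D = \begin{pmatrix} A & B \\ B^{\top} & C \end{pmatrix}, $$
where $A$ and $C$ are the $|V_1|\times|V_1|$ and $|V_2|\times|V_2|$ distance matrices of $G_1$ and $G_2$. The key point is that, since $\{u,v\}$ is a bridge, every path between $x\in V_1$ and $y\in V_2$ must traverse it, so $d(x,y)=d_{G_1}(x,u)+1+d_{G_2}(v,y)$; in particular distances inside $V_1$ (resp.\ $V_2$) are realised within $G_1$ (resp.\ $G_2$), which justifies identifying $A,C$ with the component distance matrices. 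Writing $K=(K^{(1)},K^{(2)})$ for the solution of $DK=n\mathbf{1}_n$ and setting $\sigma_1=\mathbf{1}^{\top}K^{(1)}=K_G(V_1)$ and $\sigma_2=\mathbf{1}^{\top}K^{(2)}=K_G(V_2)$, the goal becomes exactly $\sigma_1=\sigma_2$.

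The heart of the matter is the shape of the off-diagonal block. Let $a\in\R^{|V_1|}$ and $b\in\R^{|V_2|}$ collect the distances $a_x=d_{G_1}(x,u)$ and $b_y=d_{G_2}(v,y)$. The identity $d(x,y)=a_x+1+b_y$ says precisely that
$$ B = a\,\mathbf{1}^{\top} + \mathbf{1}\,(b+\mathbf{1})^{\top}, $$
a matrix of rank at most two. Two further facts make this useful: first, $a$ is the $u$-th column of $A$ and $b$ the $v$-th column of $C$, i.e.\ $a=Ae_u$ and $b=Ce_v$ for the indicator vectors $e_u,e_v$; second, $a_u=d_{G_1}(u,u)=0$ and $b_v=0$. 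I would then write the top block of $DK=n\mathbf{1}_n$, namely $AK^{(1)}+BK^{(2)}=n\mathbf{1}$, substitute $BK^{(2)}=\sigma_2\,a+(\beta+\sigma_2)\mathbf{1}$ with $\beta:=b^{\top}K^{(2)}$, and test the resulting identity against the row vector $e_u^{\top}$. Because $e_u^{\top}A=a^{\top}$ (symmetry of $A$) and $e_u^{\top}a=a_u=0$, everything collapses to the single scalar relation $\alpha=n-\beta-\sigma_2$, where $\alpha:=a^{\top}K^{(1)}$.

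Running the identical computation on the bottom block $B^{\top}K^{(1)}+CK^{(2)}=n\mathbf{1}$ and testing against $e_v^{\top}$ yields the symmetric relation $\beta=n-\alpha-\sigma_1$. The two relations read $\alpha+\beta=n-\sigma_2$ and $\alpha+\beta=n-\sigma_1$, so subtracting gives $\sigma_1=\sigma_2$, which is the assertion. (Note that this uses only that $K$ solves $DK=n\mathbf{1}_n$, not the uniqueness hypothesis as such.) The step I expect to be the crux is recognising that testing the block equations against $e_u$ and $e_v$, rather than against $\mathbf{1}$, is exactly what is needed: it simultaneously exploits $a=Ae_u$ to turn the awkward term $e_u^{\top}AK^{(1)}$ into the scalar $\alpha$, and kills the rank-one term $\sigma_2\,a$ through $a_u=0$. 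Once that choice is made the rest is a short, routine manipulation; the only things to verify carefully are the rank-two description of $B$ and the identifications $a=Ae_u$, $b=Ce_v$, all of which follow directly from the bridge property.
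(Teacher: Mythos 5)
Your argument is correct and, once unwrapped, is the same as the paper's: testing the block equations against $e_u^{\top}$ and $e_v^{\top}$ amounts to reading off rows $u$ and $v$ of $DK=n\mathbf{1}_n$, and your rank-two description of $B$ encodes exactly the fact that $d(\cdot,u)$ and $d(\cdot,v)$ differ by $\mp 1$ on $V_1$ and $V_2$, which is what the paper's two-line computation uses. The paper reaches the same subtraction $\sigma_1=\sigma_2$ without the block-matrix scaffolding, and it likewise needs only that $K$ solves the equation, not uniqueness.
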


\begin{proof}
    Let $n = |V|$. Since
    $$ D K_G = n \cdot {\bf{1}}_n, $$
    we have for any $z \in V$
    $$ n = \left( \sum_{x \in V_1} d(x,z) K_G(x) \right) + \left( \sum_{y \in V_2} d(y,z) K_G(y) \right). $$
    Applying this identity twice with $z=u$ and $z=v$ as well as the fact that the distance from $u$ to any vertex in $V_i$ differs by $(-1)^i$ to the distance from $v$ to that vertex, we obtain
    \begin{multline*}
        n = \left( \sum_{x \in V_1} d(x,u) K_G(x) \right) + \left( \sum_{y \in V_2} d(y,u) K_G(y) \right) \\= \left( \sum_{x \in V_1} (d(x,v)-1) K_G(x) \right) + \left( \sum_{y \in V_2} (d(y,v)+1) K_G(y) \right) \\ = n - \left( \sum_{x \in V_1} K_G(x) \right) + \left( \sum_{y \in V_2} K_G(y) \right). 
    \end{multline*}
    This implies the following agreement of total curvatures
    $$ \sum_{x \in V_1} K_G(x) = \sum_{y \in V_2} K_G(y). $$
\end{proof}

\begin{rmk}
    In fact, this result can also be found implicitely in \cite[proof of Theorem 4]{ChT-23} as the derived identity $\i^\top \w_1 = \i^\top \w_2$, where $\w_1$ and $\w_2$ are the Steinerberger curvature vectors of the vertices $V_1$ and $V_2$, but the proof given here focusses directly on this result and is much easier. 
\end{rmk}

It follows from Theorem \ref{thm:bertie-blossom}, in particular, that the curvature of any added leaf agrees with the total curvature of the rest of the new graph. But this does not imply that the curvature of any added leaf is independent of the choice of vertex where this leaf is added. However, this independence holds true under some specific conditions, which is a direct consequence of formula \eqref{eq:KGq-2} in the following Theorem. 

\begin{thm} \label{thm:BB_ext2}
    Let $G_1 = (V_1,E_1)$ with $n = |V_1| \ge 2$ and $u \in V_1$. Let $v$ be a single isolated vertex. Construct $G$ from $G_1$ by connecting $u$ and $v$ by an edge. Let $D_G, D_1$ and $K_G, K_{G_1}$ be the distance matrices and Steinerberger curvatures of the graphs $G, G_1$, respectively. Let $k_1 = K_{G_1}(V_1)$ be the total curvature of $G_1$.  Assume that the following three conditions are satisfied:
    \begin{itemize}
    \item[(C1)] We have 
    $$
    2 + \frac{k_1}{n} \neq 0; $$
    \item[(C2)] $D_1$ is invertible;
    \item[(C3)] $k_u = K_{G_1}(u) \neq 0$.
    \end{itemize}
    Then there exists a constant $\alpha \in \mathbb{R}$ such that, for all vertices $x \in V_1 \backslash \{ u \}$,
    $$
    K_G(x) = \alpha K_{G_1}(x).
    $$
The factor $\alpha$ is given by
\begin{equation}
\label{eq:alpha-2}
\alpha = \frac{2(n+1)}{2n+k_1}.
\end{equation}
The curvatures of the bridge vertices $u,v$ are given by
\begin{align} 
K_G(u) &= \gamma k_u, \nonumber\\
K_G(v) &= \frac{(n+1)k_1}{2n+k_1}, \label{eq:KGq-2}
\end{align}
where
$$
\gamma = \left( 1 - \frac{k_1}{2k_u} \right) \alpha. 
$$
\end{thm}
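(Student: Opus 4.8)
The plan is to exploit the block structure of the distance matrix $D_G$ and reduce everything to the already-understood data of $G_1$. Since $\{u,v\}$ is a bridge and $v$ is a leaf, no distance between vertices of $V_1$ changes when passing from $G_1$ to $G$, while $d_G(v,x)=d_{G_1}(u,x)+1$ for every $x\in V_1$. Ordering the vertices of $G$ as $V_1$ followed by $v$, this gives
\[
D_G=\begin{pmatrix} D_1 & \w \\ \w^\top & 0\end{pmatrix},\qquad \w=\mathbf{d}_u+\mathbf{1}_n,
\]
where $\mathbf{d}_u$ is the column of $D_1$ indexed by $u$. Writing the unknown curvature vector as $K_G=\begin{pmatrix}\mathbf{a}\\ b\end{pmatrix}$ with $\mathbf{a}\in\R^n$ and $b=K_G(v)$, the defining equation $D_GK_G=(n+1)\mathbf{1}_{n+1}$ splits into
\[
D_1\mathbf{a}+b\,\w=(n+1)\mathbf{1}_n \qquad\text{and}\qquad \w^\top\mathbf{a}=n+1.
\]

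The key step is to solve the first equation for $\mathbf{a}$ without ever computing $D_1^{-1}$ explicitly. The trick is to observe that both vectors on the right-hand side lie in the range of $D_1$ in a transparent way: the Steinerberger equation for $G_1$ reads $D_1K_{G_1}=n\,\mathbf{1}_n$, so $\mathbf{1}_n=\tfrac1n D_1K_{G_1}$, and $\mathbf{d}_u=D_1\mathbf{e}_u$ by definition of a column, where $\mathbf{e}_u$ is the standard basis vector. Substituting these into the first block equation rewrites it as $D_1\mathbf{a}=D_1\big(\tfrac{n+1-b}{n}K_{G_1}-b\,\mathbf{e}_u\big)$, and cancelling $D_1$ using invertibility (C2) yields
\[
\mathbf{a}=\frac{n+1-b}{n}\,K_{G_1}-b\,\mathbf{e}_u.
\]
Reading off coordinates, this already gives $K_G(x)=\alpha K_{G_1}(x)$ for every $x\in V_1\setminus\{u\}$ with $\alpha=\tfrac{n+1-b}{n}$, and $K_G(u)=\alpha k_u-b$.

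It remains to pin down $b$ (equivalently $\alpha$). One clean route is to invoke Theorem \ref{thm:bertie-blossom} with $V_2=\{v\}$: it asserts $K_G(v)=K_G(V_1)$, and since $K_G(V_1)=\sum_{x\in V_1}\mathbf{a}_x=\alpha k_1-b$, this gives the relation $2b=\alpha k_1$. (Alternatively, substituting the expression for $\mathbf{a}$ into the second equation $\w^\top\mathbf{a}=n+1$ yields the same relation after using $\mathbf{d}_u^\top\mathbf{a}=\mathbf{e}_u^\top D_1\mathbf{a}=n+1-b$.) Combining $2b=\alpha k_1$ with $\alpha=\tfrac{n+1-b}{n}$ gives the linear equation $b(2n+k_1)=(n+1)k_1$, whose unique solution, valid precisely because (C1) guarantees $2n+k_1\ne 0$, is
\[
b=\frac{(n+1)k_1}{2n+k_1}, \qquad \alpha=\frac{2(n+1)}{2n+k_1}.
\]
These are the claimed values of $K_G(v)$ and $\alpha$; note also that the whole derivation shows that any solution of \eqref{eq:K-defin} for $G$ is forced, so under (C1)--(C2) the solution is unique.

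Finally, for the bridge vertex $u$ we substitute $b=\tfrac12\alpha k_1$ into $K_G(u)=\alpha k_u-b$ to obtain $K_G(u)=\alpha k_u-\tfrac12\alpha k_1=\big(1-\tfrac{k_1}{2k_u}\big)\alpha\,k_u=\gamma k_u$, where the factorisation through $k_u$ and the definition of $\gamma$ require dividing by $k_u$, which is where hypothesis (C3) is used. I expect the only genuinely delicate point to be the symbolic inversion of $D_1$ in the second paragraph: the argument hinges on recognising that $\mathbf{1}_n$ and $\mathbf{d}_u$ are both explicit images under $D_1$, so that $D_1$ cancels cleanly; everything after that is bookkeeping, with (C1) controlling the one scalar division and (C3) only needed to express $K_G(u)$ in the stated form $\gamma k_u$.
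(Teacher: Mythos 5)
Your proof is correct, and it takes a genuinely different route from the one in the paper. The paper splits $D_G$ into a $3\times 3$ block structure (isolating $u$ as well as $v$), posits the ansatz $(\alpha\hat{\mathbf{w}}_1,\gamma k_u,K_G(v))$ for the curvature vector, reduces to a $3\times 3$ linear system in $(\alpha,\gamma,K_G(v))$, and inverts it after computing the determinant $k_u(k_1+2n)$; it also needs an external input (Theorem 5 of \cite{ChT-23}) to know $D_G$ is invertible so that the ansatz captures the actual curvature. Your argument instead keeps a $2\times 2$ block structure and rests on the observation that both $\mathbf{1}_n=\tfrac1n D_1K_{G_1}$ and the $u$-column $\mathbf{d}_u=D_1\mathbf{e}_u$ are explicit images under $D_1$, so that (C2) lets you cancel $D_1$ and solve for $\mathbf{a}$ in closed form with $b$ as the only free parameter; the last row then pins down $b$. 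This buys you three things: you derive rather than guess the shape of the solution, you get uniqueness (and, since every step is reversible, existence) without citing the invertibility of $D_G$, and you see transparently that (C3) enters only in packaging $K_G(u)$ as $\gamma k_u$. Two small points of care: your primary route to $2b=\alpha k_1$ via Theorem \ref{thm:bertie-blossom} is slightly circular as stated, since that theorem's hypothesis (a unique solution for $G$) is not yet available at that stage --- but your parenthetical derivation from $\mathbf{w}^\top\mathbf{a}=n+1$ is self-contained and should be promoted to the main argument; and the existence of a solution (not just its forced form) deserves one explicit sentence, namely that the candidate $\bigl(\tfrac{n+1-b}{n}K_{G_1}-b\,\mathbf{e}_u,\,b\bigr)$ with $b=\tfrac{(n+1)k_1}{2n+k_1}$ satisfies both block equations by reversing the computation. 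All the final formulas you obtain agree with \eqref{eq:alpha-2} and \eqref{eq:KGq-2}.
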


\begin{rmk}
Theorem \ref{thm:BB_ext2} can be viewed as a supplementary result to the later Theorem \ref{thm:BB_ext}. Theorem \ref{thm:BB_ext} below is related to Theorem 2 in the paper \cite{ChT-23} (which focusses on some different curvature aspects). Theorem \ref{thm:BB_ext2} is not covered by Theorem \ref{thm:BB_ext} below, since a single isolated vertex does not have finite Steinerberger curvature. In this case, the distance matrix is simply $D = (0)$ and the equation $DK= \i_1 =(1)$ would lead to infinite Steinerberger curvature for this isolated vertex. For that reason, a separate proof is required for this case.
\end{rmk}

\begin{proof}[Proof of Theorem \ref{thm:BB_ext2}]
We label the vertices in $G_1$ by $V_1=\{u_1,\dots,u_n\}$ and assume that $u=u_n$. The distance matrices of $G_1$ and $G$ are given by
$$ D_1 = \left(\begin{array}{cc}
		\hat{D_1} & \uu \\
		\uu^\top & 0
	\end{array}\right) \quad \text{and} \quad  D_G = \left[\begin{array}{ccc|c|c}
		& & & & \uu\\
		& \hat{D_1} & & \uu & + \\
		& & & & \i_{n-1} \\ \hline
		& \uu^\top & & 0 & 1 \\ \hline
		& \uu^\top+\i_{n-1}^\top & & 1 & 0 
	\end{array}\right],
$$
where $\uu:=d(u,\cdot) \in \R^{n-1}$  is the vector representing the distance from $u$ to the other vertices in $G_1$. Note that all entries of $\uu$ are non-zero. The entries of $D_G$ are derived by the fact that a shortest path from a vertex $x$ of $G_1$ to the vertex $v$ is a concatenation of a shortest path from $x$ to $u$ and the edge $u \sim v$. Let $k_u=K_{G_1}(u)$ be the curvature of the vertex $u$ in the original graph $G_1$. Let $\w_1 \in \R^n$ and $\hat \w_1 \in \R^{n-1}$ denote the curvatures of all vertices of $G_1$ and the one excluding $u$, respectively (that is, $\w_1=(\hat \w_1^\top,k_u)^\top \in \R^n$). It follows from (C2) that
$$ n \i_n = D_1 \cdot \w_1 = \left(\begin{array}{cc}
		\hat{D_1} & \uu \\
		\uu^\top & 0
	\end{array}\right) \left(\begin{array}{c}
	\hat{\w}_1 \\ k_u
	\end{array}\right). $$
This equation yields
\begin{align}
	\hat{D_1}\hat{\w}_1+ k_u\uu &= n\i_{n-1}, \label{eq:help1}\\
  \uu^\top \hat{\w}_1 &= n. \label{eq:help2}
\end{align}
We need to find uniquely determined constants $\alpha,\gamma,K_G(v)$ such that
$$ D_G \cdot \begin{pmatrix} \alpha \hat \w_1 \\ \gamma k_u \\ K_G(v) \end{pmatrix} = (n+1)\i_{n+1}. $$
The invertibility of $D_1$, given by condition (C2), implies the invertibility of $D_G$, by Theorem 5 in \cite{ChT-23}. Therefore, this equation nails down the Steinerberger curvatures of the vertices in $G$. Moreover, this equation translates into:
\begin{align}
	\alpha \hat{D_1} \hat{\w}_1 +\gamma k_u\uu  + K_G(v)(\uu+\i_{n-1}) &= (n+1)\i_{n-1} \label{eq:bigD-1-2}\\
	\alpha \uu^\top \hat{\w}_1 + K_G(v) &=n+1 \label{eq:bigD-2-2}\\
	\alpha (\uu^\top+\i_{n-1}^\top) \hat{\w}_1 + \gamma k_u &=n+1 \label{eq:bigD-3-2}.  
\end{align}
Computing $\eqref{eq:bigD-1-2}-\eqref{eq:bigD-2-2}\i_{n-1}$ by using \eqref{eq:help1} and \eqref{eq:help2} leads to
\begin{equation}
	((\gamma-\alpha)k_u+ K_G(v))\uu = \mathbf{0}_{n-1}. \label{eq:bigD-5-2}
\end{equation}
Using the facts that the total curvature of $V_1$ is given by $\i_{n-1}^\top \hat{w}_1 + k_u$ and that $\uu$ is a non-zero vector, we can rewrite the equations \eqref{eq:bigD-2-2},\eqref{eq:bigD-3-2},\eqref{eq:bigD-5-2} as follows:
$$ \left(\begin{array}{ccc}
		n & 0 & 1    \\
		n+k_1-k_u & k_u & 0    \\
		-k_u & k_u & 1
	\end{array}\right)
\left(\begin{array}{c}
	\alpha \\ \gamma \\ K_G(v)
\end{array}\right) = 
\left(\begin{array}{c}
	n+1 \\ n+1 \\ 0
\end{array}\right). $$
The determinant of the matrix on the left hand side is $k_u(k_1+2n) \neq 0$ by (C1) and (C3). Therefore, the matrix on the left hand side can be inverted, leading to a unique solution, given by
\begin{align*}
	\left(\begin{array}{c}
		\alpha \\ \gamma \\ K_G(v)
	\end{array}\right) 
	= \frac{n+1}{k_1+2n}
	\left(\begin{array}{c}
		2 \\ 2(1-\frac{k_1}{2k_u}) \\ 
		k_1
	\end{array}\right).
\end{align*}
This finishes the proof of the theorem.
\end{proof}

The following theorem is closely related to a result and proof in \cite{ChT-23}. While Theorem 2 in \cite{ChT-23} is concerned with non-negativity of Steinerberger curvature of two non-negatively curved graphs connected via a bridge, a slight modification of its proof leads to the theorem presented here. 
 
\begin{thm}[cf. {\cite[Theorem 2]{ChT-23}} and proof therein] \label{thm:BB_ext}
    For $i \in \{1,2\}$, let $G_i = (V_i,E_i)$ and $u \in V_1$ and $v \in V_2$. Let $n_1 = |V_1| \ge 2$ and $n_2=|V_2| \ge 2$. Construct $G$ by connecting $G_1$ and $G_2$ by an edge $u \sim v$. Let $D_G, D_1, D_2$ and $K_G, K_{G_1},K_{G_2}$ be the distance matrices and Steinerberger curvatures of the graphs $G, G_1, G_2$, respectively.  Let $k_1 = K_{G_1}(V_1)$ be the total curvature $G_1$ and $k_2=K_{G_2}(V_2)$ be the total curvature of $G_2$.  Assume that the following three conditions are satisfied:
    \begin{itemize}
    \item[(C1)] We have 
    $$
    Z = \left( 2 + \frac{k_1}{n_1} \right)\left( 2 + \frac{k_2}{n_2} \right) \neq 4; $$
    \item[(C2)] $D_1$ and $D_2$ are invertible;
    \item[(C3)] $k_u = K_{G_1}(u) \neq 0$ and $k_v = K_{G_2}(v) \neq 0$.
    \end{itemize}
    Then there exists constants $\alpha,\beta \in \mathbb{R}$ such that, for all vertices $x \in V_1 \backslash \{ u \}$ and $y \in V_2 \backslash \{v\}$,
    $$
    K_G(x) = \alpha K_{G_1}(x), \qquad
    K_G(y) = \beta K_{G_2}(y).
    $$
The factors $\alpha, \beta$ are given by
\begin{equation} \label{eq:alphabeta}
\alpha = \frac{2(n_1+n_2)k_2}{n_1n_2(Z-4)}
\qquad
\beta = \frac{2(n_1+n_2)k_1}{n_1n_2(Z-4)}. 
\end{equation}
The curvatures of the bridge vertices $u,v$ are given by
$$ K_G(u) = \gamma K_{G_1}(u), \quad K_G(v) = \delta K_{G_2}(v) $$ 
with
$$
\gamma = \left( 1 - \frac{k_1}{2k_u} \right) \alpha, 
\qquad
\delta =  \left( 1 - \frac{k_2}{2k_v} \right) \beta.
$$
\end{thm}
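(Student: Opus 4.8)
The plan is to mirror the block-matrix computation in the proof of Theorem \ref{thm:BB_ext2}, now with a genuine second graph $G_2$ in place of the single isolated vertex. First I would enumerate $V_1 = \{u_1,\dots,u_{n_1}\}$ with $u = u_{n_1}$ and $V_2 = \{w_1,\dots,w_{n_2}\}$ with $v = w_{n_2}$, and write $\uu = d(u,\cdot) \in \R^{n_1-1}$ and $\v = d(v,\cdot) \in \R^{n_2-1}$ for the distance vectors from the two bridge endpoints to the remaining vertices of their own components. Since every shortest path in $G$ between a vertex of $V_1$ and a vertex of $V_2$ must traverse the bridge $u \sim v$, the distance between $x \in V_1\setminus\{u\}$ and $y \in V_2\setminus\{v\}$ is $d(x,u)+1+d(v,y)$. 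Ordering the vertices as $(\hat V_1, u, \hat V_2, v)$, this gives $D_G$ in $4\times 4$ block form whose diagonal blocks are $\hat D_1$ and $\hat D_2$, whose cross block $\hat V_1\times\hat V_2$ is the rank-structured matrix $(\uu+\i_{n_1-1})\i_{n_2-1}^\top + \i_{n_1-1}\v^\top$, whose $u$-to-$\hat V_2$ row is $(\v+\i_{n_2-1})^\top$, and whose $\hat V_1$-to-$v$ column is $\uu+\i_{n_1-1}$, with the bridge entry equal to $1$.

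Writing $\w_1 = (\hat\w_1^\top, k_u)^\top$ and $\w_2 = (\hat\w_2^\top, k_v)^\top$ for the curvature vectors of $G_1, G_2$, condition (C2) yields the helper identities $\hat D_1\hat\w_1 + k_u\uu = n_1\i_{n_1-1}$ and $\uu^\top\hat\w_1 = n_1$ (and their subscript-$2$ analogues), while the definition of total curvature gives $\i_{n_1-1}^\top\hat\w_1 = k_1 - k_u$ and $\i_{n_2-1}^\top\hat\w_2 = k_2 - k_v$. I would then posit the ansatz
$$ K_G = \big(\alpha\hat\w_1^\top,\ \gamma k_u,\ \beta\hat\w_2^\top,\ \delta k_v\big)^\top $$
and impose $D_G K_G = (n_1+n_2)\i_{n_1+n_2}$. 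Expanding the $\hat V_1$-row block and substituting the helper identities turns it into $a_1\uu + b_1\i_{n_1-1} = (n_1+n_2)\i_{n_1-1}$, where crucially $b_1$ coincides exactly with the left-hand side of the scalar $u$-row equation. Imposing that scalar equation forces $b_1 = n_1+n_2$, leaving $a_1\uu = \mathbf{0}_{n_1-1}$; since $\uu\neq\mathbf{0}$ (distances are positive), its coefficient $a_1$ must vanish, and symmetrically $a_2=0$ for the $\hat V_2$-block. This produces four scalar equations in $(\alpha,\beta,\gamma,\delta)$: the $u$- and $v$-row equations together with the coefficient conditions $a_1=0$ and $a_2=0$.

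These four equations are linear and can be solved without assuming the answer. Subtracting $a_1=0$ from $a_2=0$ cancels $\gamma,\delta$ and collapses to the clean relation $\alpha k_1 = \beta k_2$. Adding the $u$- and $v$-row equations and eliminating $\gamma k_u + \delta k_v$ via $a_1=0$ yields $(n_1+\tfrac{k_1}{2})\alpha + n_2\beta = n_1+n_2$, which together with $\alpha k_1 = \beta k_2$ is equivalent to the $2\times 2$ system
$$ n_1\alpha + \big(n_2+\tfrac{k_2}{2}\big)\beta = n_1+n_2, \qquad \big(n_1+\tfrac{k_1}{2}\big)\alpha + n_2\beta = n_1+n_2. $$
Its determinant is $n_1n_2 - (n_2+\tfrac{k_2}{2})(n_1+\tfrac{k_1}{2}) = -\tfrac14\,n_1n_2(Z-4)$, which is nonzero precisely by (C1); Cramer's rule then gives exactly $\alpha = \tfrac{2(n_1+n_2)k_2}{n_1n_2(Z-4)}$ and $\beta = \tfrac{2(n_1+n_2)k_1}{n_1n_2(Z-4)}$. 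Back-substituting $\alpha,\beta$ into the $v$-row equation gives $k_u\gamma = (k_u - \tfrac{k_1}{2})\alpha$, and by (C3) dividing by $k_u\neq0$ yields $\gamma = (1-\tfrac{k_1}{2k_u})\alpha$; symmetrically $\delta = (1-\tfrac{k_2}{2k_v})\beta$. Finally, (C2) together with \cite[Theorem 5]{ChT-23} ensures $D_G$ is invertible, so $D_G K_G = (n_1+n_2)\i_{n_1+n_2}$ has a \emph{unique} solution; since our ansatz exhibits a solution, it must be that solution, which establishes the proportionality statements.

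I expect the main obstacle to be the careful bookkeeping of the four-block matrix $D_G$ — in particular getting the rank-structured cross block $(\uu+\i_{n_1-1})\i_{n_2-1}^\top + \i_{n_1-1}\v^\top$ and the two ``$+\i$'' shifts on the bridge rows and columns correct — and the reduction step in which the $\uu$- and $\v$-coefficients are isolated. That step hinges on the nontrivial observation that the $\i$-coefficient of each vector block reproduces exactly the corresponding scalar row equation, so that after imposing the scalar equations one is left with a pure multiple of $\uu$ (resp.\ $\v$) equal to zero. Once the four scalar equations are assembled correctly, identifying the determinant with $Z-4$ and reading off $\alpha,\beta,\gamma,\delta$ is routine linear algebra.
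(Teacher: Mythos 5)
Your proposal is correct and takes essentially the same route as the paper: the same ansatz (non-bridge curvatures scaled by $\alpha,\beta$, bridge vertices by $\gamma,\delta$), a reduction to the identical four linear equations in $(\alpha,\beta,\gamma,\delta)$, and the appeal to \cite[Theorem 5]{ChT-23} for the invertibility of $D_G$. The only differences are presentational: you organize $D_G$ in the finer block form used in the paper's proof of Theorem \ref{thm:BB_ext2} and solve the resulting system by elimination, whereas the paper follows the notation of \cite{ChT-23} (with $\y,\z$ the bridge columns of $D_1,D_2$) and instead computes the $4\times 4$ determinant $-n_1n_2k_uk_v(Z-4)$ directly.
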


\begin{rmk} Note that Steinerberger curvatures of the bridge vertices $u$ and $v$ in $G$ can also be found in \cite[(2.1) and (2.2)]{ChT-23}.
\end{rmk}

\begin{proof}
    We follow closely the notation and arguments in the proof of \cite[Theorem 2]{ChT-23} with the necessary slight modifications. Let
    $$ V_1 = \{ u_1,\dots,u_{n_1}=u \}, $$ 
    $$ V_2 = \{ v=v_1,\dots,v_{n_2} \}, $$
    $\y$ be the last column of $D_1$ and $\z$ be the first column of $D_2$. Moreover, let $\w_i \in \R^{n_i}$ be the solution of 
    $$ D_i \w_i = n_i \i_{n_i} $$ 
    for $i=1,2$. Uniqueness of this solution follows from condition (C2). We also know from Theorem 5 in \cite{ChT-23} that the distance matrix $D_G$ of the graph $G$ is invertible, that is, there exists a unique solution for $D_G \w = (n_1+n_2) \i_{n_1+n_2}$, which we want to find. We set
    $$ \w = \left(\begin{array}{c} \alpha \w_1 \\ \beta \w_2 \end{array}\right) +(\gamma-\alpha)k_u e_{n_1} + (\delta-\beta)k_v e_{n_1+1}. $$
    Then an analogous computation to the one in \cite{ChT-23} yields
    \begin{align*} 
    D_G \w &= \left(\begin{array}{c} \alpha n_1 \i + \beta \left( k_2 \y + k_2 \i + n_2\i \right) \\ \alpha(k_1 \z + k_1 \i + n_1 \i) + \beta n_2 \i \end{array} \right)+ \left(\begin{array}{c} (\gamma-\alpha)k_u \y + (\delta-\beta)k_v (\y + \i)  \\ (\gamma-\alpha)k_u (\z+\i) + (\delta-\beta)k_v \z \end{array}\right) \\
    &= \left(\begin{array}{c} (\alpha n_1 + \beta n_2 + \beta k_2 + (\delta-\beta)k_v) \i + ((\gamma-\alpha)k_u+(\delta-\beta)k_v+\beta k_2) \y \\ (\alpha n_1 + \beta n_2 + \alpha k_1 + (\gamma-\alpha)k_u) \i + ((\gamma-\alpha)k_u+(\delta-\beta)k_v+\alpha k_1) \z \end{array}\right),
    \end{align*}
    where we used $\i^\top \w_i = k_i$ for $i=1,2$ and $y^\top \w_1 = n_1$ and $z^\top \w_2 = n_2$. In order to match the right hand side with $(n_1+n_2)\i_{n_1+n_2}$, the coefficients on the right hand in front of $\i$ must coincide with $n_1+n_2$ and the coefficients in front of $\y$ and $\z$ must vanish:
    \begin{align*}
    n_1 \alpha + (n_2+k_2-k_v)\beta +k_v \delta &= n_1+n_2, \\
    (n_1+k_1-k_u) \alpha + n_2 \beta + k_u \gamma &= n_1+n_2, \\
    -k_u \alpha +(k_2-k_v)\beta + k_u \gamma + k_v \delta &= 0, \\
    (k_1-k_u)\alpha-k_v\beta + k_u \gamma + k_v \delta &= 0. 
    \end{align*}
    This can be rewritten as 
    \begin{align*}
	\left(\begin{array}{cccc}
		n_1 & n_2+k_2-k_v & 0 & k_v    \\
		n_1+k_1-k_u & n_2 & k_u & 0    \\
		-k_u & k_2-k_v & k_u & k_v \\
		k_1-k_u & -k_v & k_u & k_v
	\end{array}\right)
\left(\begin{array}{c}
	\alpha \\ \beta \\ \gamma \\ \delta
\end{array}\right) = 
\left(\begin{array}{c}
	n_1+n_2 \\ n_1+n_2 \\ 0 \\ 0
\end{array}\right).
\end{align*}
The determinant of the matrix on the left hand side is $-n_1 n_2 k_u k_v (Z-4) \neq 0$ by (C1) and (C3). Therefore, this equation has a unique solution given  by
\begin{align*}
	\left(\begin{array}{c}
		\alpha \\ \beta \\ \gamma \\ \delta
	\end{array}\right) 
	= \frac{n_1+n_2}{n_1n_2(Z-4)}
	\left(\begin{array}{c}
		2k_2 \\ 2k_1 \\ 
		2k_2(1-\frac{k_1}{2k_u}) \\ 2k_1(1-\frac{k_2}{2k_v})
	\end{array}\right).
\end{align*}
This finishes the proof of the theorem.
\end{proof}

Let us end this section with the following corollary.

\begin{cor}
    With the assumptions and notations of Theorem \ref{thm:BB_ext}, we have
    \begin{equation} \label{eq:KGVrel} 
    K_G(V) = \frac{\alpha}{2} K_{G_1}(V_1) + \frac{\beta}{2} K_{G_2}(V_2) = \alpha K_{G_1}(V_1) = \beta K_{G_2}(V_2). 
    \end{equation}
    In particular, the total Steinerberger curvature of the graph $G$ is determined by the sizes and total
    Steinerberger curvatures of $G_1$ and $G_2$ and does not depend on which vertices of $G_1$ and $G_2$ are connected by a bridge to construct $G$.
\end{cor}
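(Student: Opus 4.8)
The plan is to compute the total curvature $K_G(V)$ by splitting the sum over the two sides of the bridge and exploiting the relations supplied by Theorem \ref{thm:BB_ext}. First I would compute the restricted total curvature $K_G(V_1) = \sum_{x \in V_1} K_G(x)$. By Theorem \ref{thm:BB_ext} we have $K_G(x) = \alpha K_{G_1}(x)$ for every $x \in V_1 \backslash \{u\}$ and $K_G(u) = \gamma k_u$, so that
$$ K_G(V_1) = \alpha \bigl( k_1 - k_u \bigr) + \gamma k_u, $$
using $\sum_{x \in V_1 \backslash \{u\}} K_{G_1}(x) = k_1 - k_u$. Substituting $\gamma = \bigl( 1 - \tfrac{k_1}{2k_u} \bigr)\alpha$, the $\pm \alpha k_u$ terms cancel and the remaining $k_u$-dependence drops out, leaving the clean expression $K_G(V_1) = \tfrac{\alpha}{2} k_1 = \tfrac{\alpha}{2} K_{G_1}(V_1)$. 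By the symmetric computation on the other side, $K_G(V_2) = \tfrac{\beta}{2} k_2 = \tfrac{\beta}{2} K_{G_2}(V_2)$.

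Adding the two contributions gives the first asserted equality
$$ K_G(V) = K_G(V_1) + K_G(V_2) = \frac{\alpha}{2} K_{G_1}(V_1) + \frac{\beta}{2} K_{G_2}(V_2). $$
It then remains to identify $\tfrac{\alpha}{2} k_1$ with $\tfrac{\beta}{2} k_2$. The quickest route is to invoke Theorem \ref{thm:bertie-blossom}: the edge $u \sim v$ is a bridge of $G$ separating it into $G_1$ and $G_2$, and $D_G$ is invertible by (C2) (via Theorem 5 in \cite{ChT-23}), so $G$ admits a unique curvature vector and the theorem yields $K_G(V_1) = K_G(V_2)$, i.e. $\tfrac{\alpha}{2} k_1 = \tfrac{\beta}{2} k_2$. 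Consequently $K_G(V) = 2 K_G(V_1) = \alpha k_1$ and likewise $K_G(V) = \beta k_2$, which are the two remaining equalities.

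Alternatively, and without appealing to Theorem \ref{thm:bertie-blossom}, the identity $\alpha k_1 = \beta k_2$ can be read off directly from the closed forms \eqref{eq:alphabeta}, since both equal $\tfrac{2(n_1+n_2)k_1 k_2}{n_1 n_2 (Z-4)}$; I would include this one-line check as a sanity verification, as it also confirms consistency with Theorem \ref{thm:bertie-blossom}. No serious obstacle is expected here: the only delicate point is the cancellation in the $K_G(V_1)$ computation, where one must correctly use $\sum_{x \in V_1 \backslash \{u\}} K_{G_1}(x) = k_1 - k_u$ and observe that the factor $\gamma$ is precisely tuned so that the $k_u$-dependence disappears, making the restricted total curvature independent of the choice of bridge endpoint. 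The final clause of the corollary then follows immediately, since by \eqref{eq:alphabeta} the factors $\alpha,\beta$ depend only on $n_1,n_2,k_1,k_2$, so the expression $\tfrac{\alpha}{2}k_1 + \tfrac{\beta}{2}k_2$ is determined by the sizes and total curvatures of $G_1$ and $G_2$ alone.
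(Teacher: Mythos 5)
Your argument is correct and follows essentially the same route as the paper: compute $K_G(V_1)=\alpha(k_1-k_u)+\gamma k_u$, use the definition of $\gamma$ to get $\tfrac{\alpha}{2}k_1$, do the same on the other side, and deduce the remaining equalities from Theorem \ref{thm:bertie-blossom}. Your additional one-line verification that $\alpha k_1=\beta k_2$ directly from \eqref{eq:alphabeta} is a nice consistency check but not a substantive difference.
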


\begin{proof}
    Let $u$ be a vertex in $G_1=(V_1,E_1)$, $v$ be a vertex in $G_2=(V_2,E_2)$, and $G=(V=V_1\cup V_2,E)$ be constructed by connecting $G_1$ and $G_2$ by the edge $u \sim v$, that is $E = E_1 \cup E_2 \cup \{\{u,v\}\}$.
    If follows from Theorem \ref{thm:BB_ext} that
    \begin{align*}
    K_G(V_1) &= K_G(V_1\setminus \{u\}) + K_G(u) \\
    &= \alpha K_{G_1}(V_1\setminus \{u\}) + \gamma k_u \\
    &= \alpha K_{G_1}(V_1\setminus \{u\}) + \left( 1 - \frac{k_1}{2k_u} \right) \alpha k_u \\
    &= \alpha K_{G_1}(V_1\setminus \{u\}) + \alpha K_{G_1}(u) - \frac{\alpha K_{G_1}(V_1)}{2} \\
    &= \frac{\alpha}{2} K_{G_1}(V_1). 
    \end{align*}
    Combining this result with the same result for $K_G(V_2)$ confirms the first equality of \eqref{eq:KGVrel}. The last two equations follow from Theorem \ref{thm:bertie-blossom}. The final statement follows from the fact that $\alpha,\beta$ depend only on the sizes and total Steinerberger curvatures of $G_1$ and $G_2$.
\end{proof}

\section{Questions related to non-positive Steinerberger curvature}
\label{sec:problems}

The content of this section are mainly observations and questions for which we have no rigorous proofs and only experimental evidence using the freely accessible earlier mentioned graph curvature applet at

\medskip

\begin{center}
\url{https://www.mas.ncl.ac.uk/graph-curvature/}
\end{center}

\medskip

\noindent
This app allows to compute Steinerberger curvature for arbitrary combinatorial graphs.
We hope that these observation stimulate further research.  

\begin{figure}[h!]
\begin{center}
\includegraphics[width=\textwidth]{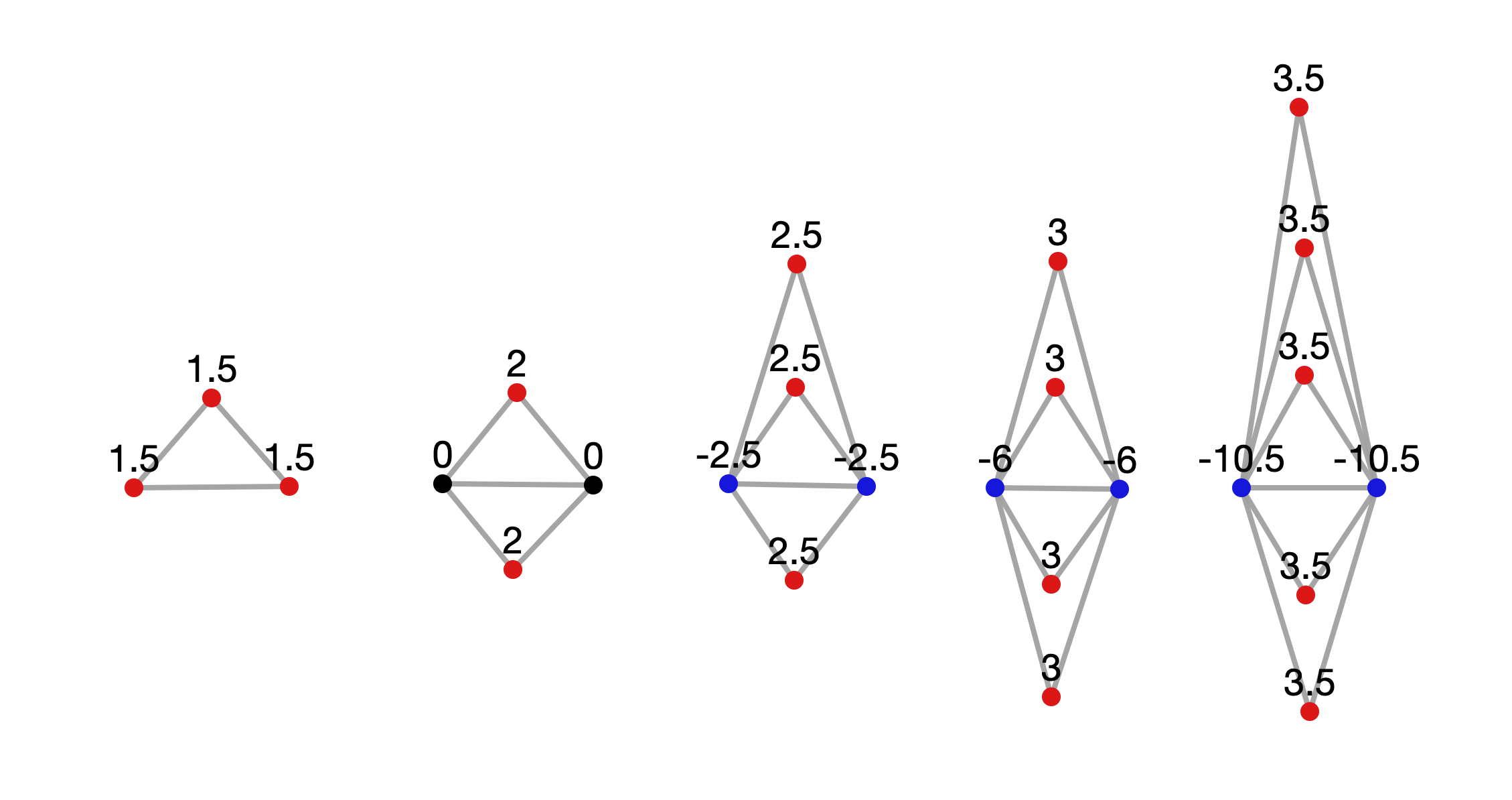}
\end{center}
\caption{The graph family $A(n)$: $A(1), A(2), A(3), A(4), A(5)$ from left to right}
\label{fig:bookgraphs}
\end{figure}

Our first investigations are concerned with total Steinerberger curvature. It is not easy to find examples of small finite graphs which have zero or negative total Steinerberger curvature. Recall that all vertex transitive graphs have automatically positive Steinerberger curvature in contrast to Ollivier Ricci curvature (see \cite{Ol-09} for the definition of the latter, which is a curvature notion defined on edges or, more generally, on pairs of vertices). While there are no expander families with non-negative Ollivier Ricci curvature with idleness $p=1/2$ (see \cite{Sa-22}), most of these families are Cayley graph families and therefore vertex transitive with positive Steinerberger curvature. Note that this fact uncovers a stark contrast between Steinerberger curvature and Ollivier Ricci curvature: almost all explicit examples of expander graph families have positive Steinerberger curvature but there must be graphs within all these families whose Ollivier Ricci curvature is negative for at least some edges.   

Figure \ref{fig:bookgraphs} presents a family of graphs $A(n)$, which are constructed by glueing $n$ triangles along a common egde. Amongst these graphs, $A(4)$ has $6$ vertices and vanishing total Steinerberger curvature and $A(5)$ has seven vertices and negative total Steinerberger curvature. The characteristic polynomial of the distance matrix of $A(n)$ is given by
$$ (x^2+(1-2n)x-2)(x+1)(x+2)^{n-1}, $$
which shows that the distance matrix of all of these graphs is non-singular. Another graph with vanishing total Steinerberger curvature and 9 vertices is presented in Figure \ref{fig:phil-example}(left). The characteristic polynomial of the distance matrix of this graph is
$$ x (x^2-14x+2) (x^2+4x+1)^2 (x+3)^2, $$
and therefore singular. These examples give rise to the following:

\medskip

{\bf Problem 1:} Find small graphs which have vanishing or negative total Steinerberger curvature. Are there any graphs with vanishing total Steinerberger curvature with less than $6$ vertices? Are there any graphs with negative total Steinerberger cuvature with less than $7$ vertices?

\medskip

\begin{figure}[h!]
\begin{center}
\includegraphics[width=0.37\textwidth]{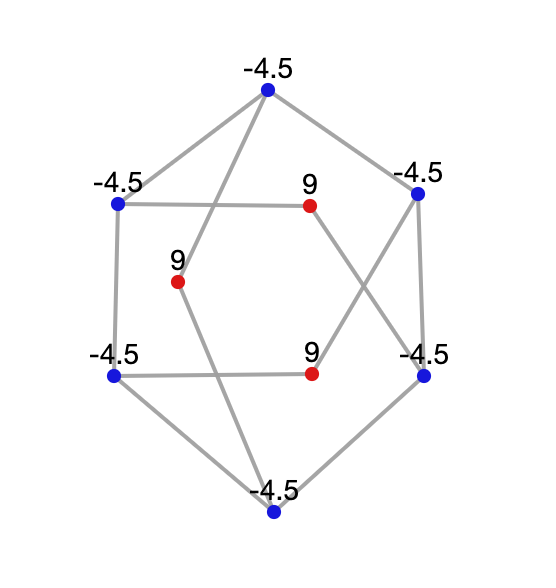}
\includegraphics[width=0.53\textwidth]{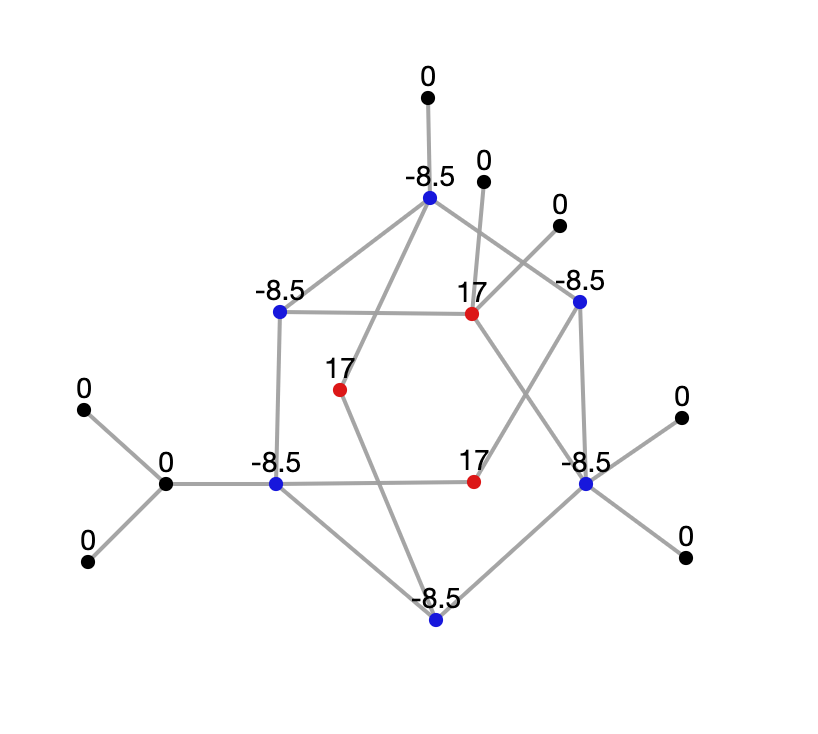}
\end{center}
\caption{A graph $G$ with 9 vertices and vanishing total Steinerberger curvature (left) and an extension of $G$ by consecutive attachment of $8$ leaves (right)}
\label{fig:phil-example}
\end{figure}

Another surprising property of the graph $G$ in Figure \ref{fig:phil-example}(left) is the following. If the graph $G$ is extended to a sequence of graphs $G_0=G, G_1, G_2, \dots, G_k$, where $G_{i+1}$ is obtained from $G_i$ by the attachment of a leaf to any of the vertices of $G_i$, then the Steinerberger curvature of the three positively curved vertices increases by $k$, that is the number of newly created vertices. For example, the graph in Figure \ref{fig:phil-example}(right) is derived from $G$ by the successive attachment of $8$ leaves and the Steinerberger curvatures of the positively curved vertices increases from $9$ to $17 = 9+8$. A similar counting property can be observed for the vertices with negative Steinerberger curvature. There the curvature drops by $1/2$ after every attachment of a new leaf. The same phenomenon can be observed when we apply this process to the graph $A(3)$ of vanishing total Steinerberger curvature. This leads to the following question.

\medskip

{\bf Problem 2:} Given a connected graph with vanishing total Steinerberger curvature. Is it true that each positively (negatively) curved vertex increases (decreases) by the same fixed amount (depending on the choice of vertex) in every step of attaching a new leaf? 

\medskip

\begin{figure}[h!]
\begin{center}
\includegraphics[height=6cm]{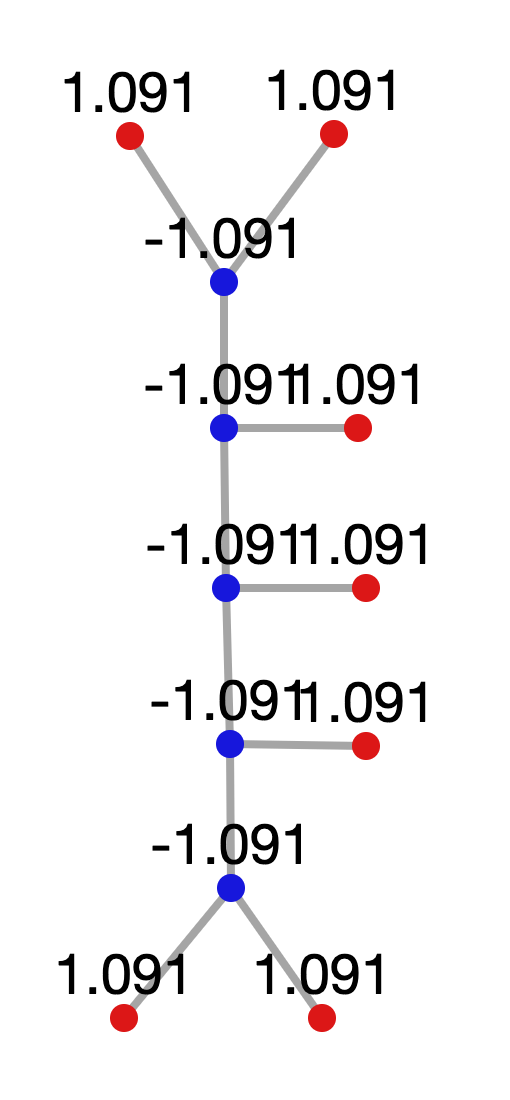}
\includegraphics[height=6cm]{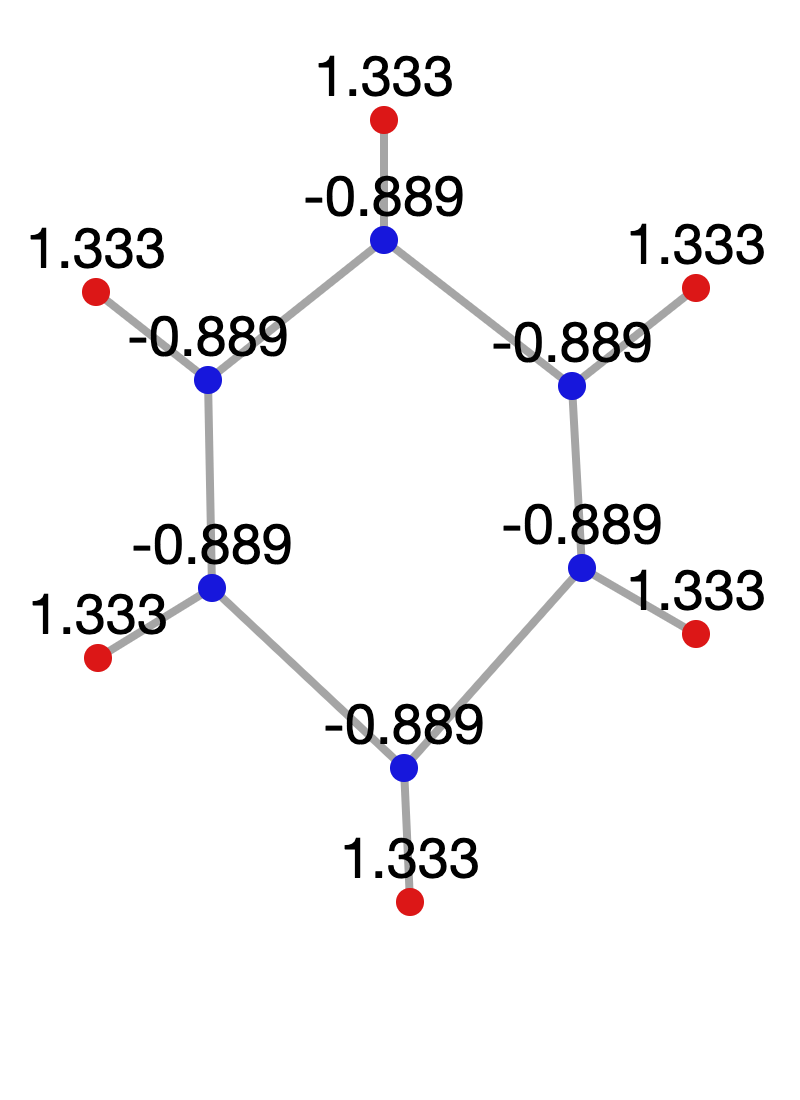}
\includegraphics[height=6cm]{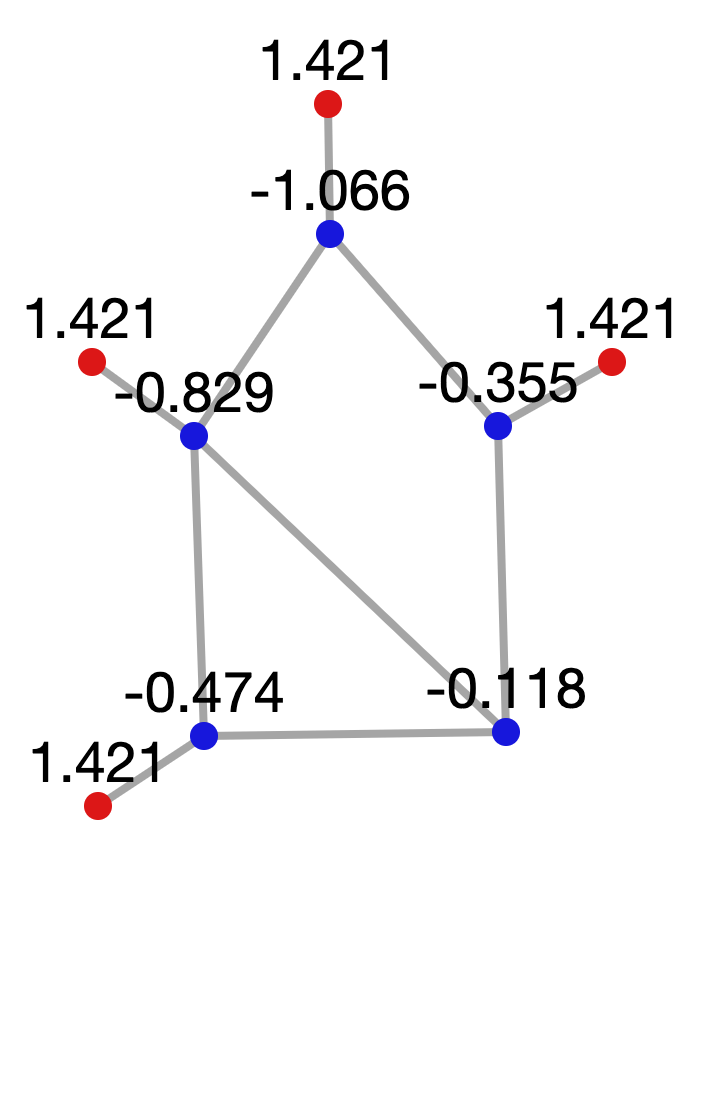}
\end{center}
\caption{A path length $5$ (left) needs $7$ leaves, a cycle of length $6$ (center) needs $6$ leaves, and the graph with $5$ vertices (right) needs $4$ leaves to make all their vertices negatively curved.}
\label{fig:make-neg}
\end{figure}

The final problem is concerned with a graph $G=(V,E)$ with strictly positive Steinerberger curvature at all vertices. Experiments show that the curvatures of all vertices $V$ of $G$ can be made negative by the attachment of a sufficient number of leaves to each of these vertices. For example, Steinerberger curvature of all vertices of a path of length $n$ can be made negative by the attachment of $n+2$ leaves (see Figure \ref{fig:make-neg}(left)), Steinerberger curvature of all vertices of a cycle of length $n$ can be made negative by the attachment of $n$ vertices (see Figure \ref{fig:make-neg}(center)), and there are also positively curved graphs with $n$ vertices whose Steinerberger curvatures can be made negative by adding less than $n$ leaves (see Figure \ref{fig:make-neg}(right)). This motivates our final problem.

\medskip

{\bf{Problem 3:}} For any positively curved graph, is there a way to determine the minimum number of leaves that need to be added to make all of its vertices negatively curved?

\medskip

\section{Bonnet-Myers sharp graphs}
\label{sec:bonnetmyers}

As mentioned in the Introduction, Steinerberger's Bonnet-Myers Theorem is a consequence of his Minimax theorem (Theorem 4 in \cite{St-23}), which is the strongest result in Steinerberger's paper. For the reader's convenience, we start with a direct proof of the inequality
\begin{equation} \label{eq:BM-simple}
{\rm{diam}}(G) \le \frac{2}{\min_i K_i} 
\end{equation}
for any solution $K \in \mathbb{R}_{\ge 0}^n$ of equation \eqref{eq:K-defin} as well as the fact that Bonnet-Myers sharp graphs have constant Steinerberger curvature. The smallest graph which has constant Steinerberger curvature $K$ and which is not Bonnet-Myers sharp is the triangle with $K=\frac{3}{2}$.
Recall that $G$ is called Bonnet-Myers sharp if equation \eqref{eq:K-defin} has a solution and if \eqref{eq:BM-simple} holds with equality. It was shown in \cite{ChT-23} that Bonnet-Myers sharpness is preserved under taking Cartesian products. 

\begin{proof}[Proof of \eqref{eq:BM-simple}]
    Let $G=(V,E)$ be a finite connected graph with
    $$ V = \{ v_1, \dots, v_n \} $$ 
    and $v_j,v_k \in V$ satisfying 
    $$ d(v_j,v_k) = L := {\rm{diam}}(G). $$
    Let $K \in \mathbb{R}^n$ be a solution of
    equation \eqref{eq:K-defin}. Then we have, by the triangle inequality and \eqref{eq:K-defin},
    \begin{equation} \label{eq:BM-proof-simple} 
    n \cdot \min_i K_i \cdot L \le \sum_i K_i (d(v_j,v_i)+d(v_i,v_k)) = n + n, 
    \end{equation}
    which implies $\min_i K_i \le 2/L$. This proves \eqref{eq:BM-simple}.

    Now we assume that
    $$ \min_i K_i = \frac{2}{L} $$
    for some solution $K$ of equation \eqref{eq:K-defin}, that is, $G$ is Bonnet-Myers sharp. (Note that, by the inequality $\min_i K_i \le 2/L$, this equation holds then for any solution $K$.) In this case, all inequalities in \eqref{eq:BM-proof-simple} are equalities and all entries $K_i$ must coincide. This shows that all Bonnet-Myers sharp graphs have constant Steinerberger curvature.
\end{proof}

For the main result in this section, we first need to introduce the notions of self-centered and antipodal graphs. These notions were also relevant in the classification result of Bonnet-Myers sharpness for Ollivier Ricci curvature (see \cite[Theorem 1.6]{CKKLMP-20}).
Note that in the definition of antipodal graphs, we follow the restrictive definition given in \cite{KMS-04}. 

\begin{defin}
  Let $G=(V,E)$ be a finite connected graph.
  
  The graph $G$ is called \emph{self-centered} if, for every vertex $x \in V$, there exists a vertex $\hat x \in V$ with $d(x,\hat x) = {\rm{diam}}(G)$. 

  The graph $G$ is called \emph{antipodal} if, for every vertex $x \in V$, there exists a vertex $\hat x \in V$ satisfying
  $$ [x,\hat x] := \{ y \in V: d(x,\hat x) = d(x,y) + d(y,\hat x) \} = V. $$
\end{defin}

Let us quickly show that any antipodal graph $G=(V,E)$ is also self-centered: Let $y,y' \in V$ be a pair of vertices satisfying $d(y,y') = L := {\rm{diam}}(G)$. If $G$ is antipodal, there exists $\hat y \in V$ such that $[y,\hat y] = V$ and it is easy to see that we must have $\hat y =y'$. Let $x \in V$ an arbitrary vertex and, since $G$ is antipodal, there exists $\hat x \in V$ satisfying:
$$ [x,\hat x] = V. $$
Then we have, by the triangle inequality
\begin{align*}
2d(x,\hat x) &= (d(x,y) + d(y,\hat x)) + (d(x,\hat y)+ d(\hat y,\hat x)) \\
&= (d(y,x) + d(x,\hat y)) + (d(y,\hat x)+d(\hat x,\hat y)) \\
& = 2d(y,\hat y) = 2L.
\end{align*}
This shows that $d(x,\hat x) = L$ and, therefore, $G$ is self-centered. \qed

\medskip

Now we are ready to prove the main result of this section, providing a large class of Bonnet-Myers sharp graphs: the antipodal graphs.

\begin{thm} \label{thm:BM-equiv}
  Let $G$ be a finite connected graph admitting solutions for \eqref{eq:K-defin}. Then the following are equivalent:
  \begin{itemize}
      \item[(a)] $G$ is antipodal.
      \item[(b)] $G$ is self-centered and Bonnet-Myers sharp.
  \end{itemize}
\end{thm}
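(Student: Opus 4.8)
The plan is to reformulate Bonnet--Myers sharpness as a condition on vertex transmissions and then match it against antipodality. Writing $L = \mathrm{diam}(G)$ and $T(x) = \sum_{y \in V} d(x,y)$ for the transmission of $x$, I would first record the equivalence: a graph admitting a solution of \eqref{eq:K-defin} is Bonnet--Myers sharp if and only if $T(x) = nL/2$ for every $x \in V$. Indeed, by the constant-curvature fact proved above, Bonnet--Myers sharpness forces $K \equiv 2/L$, and feeding this into $DK = n\mathbf{1}_n$ gives $T(x) = nL/2$ for all $x$; conversely, constant transmission $nL/2$ makes the constant vector $(2/L)\mathbf{1}_n$ a nonnegative solution of \eqref{eq:K-defin} whose minimal entry $2/L$ already meets the upper bound $2/L$ coming from \eqref{eq:BM-simple}, so it realises sharpness. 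With this reduction, the theorem becomes the statement that $G$ is antipodal if and only if $G$ is self-centered and has constant transmission $nL/2$.

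For the direction (a) $\Rightarrow$ (b), I would start from the already-established fact that an antipodal graph is self-centered, so it only remains to prove constant transmission. For each $x$ let $\hat x$ be its antipode; since $[x,\hat x] = V$ and $d(x,\hat x) = L$ (the latter from the discussion preceding the theorem), we get $d(\hat x, y) = L - d(x,y)$ for all $y$. The crucial observation is that the map $x \mapsto \hat x$ is then a distance-preserving involution: substituting $y = \hat z$ into this relation and applying it again for $z$ yields $d(\hat x, \hat z) = d(x,z)$. Consequently $T(\hat x) = T(x)$, while summing $d(\hat x, y) = L - d(x,y)$ over $y$ gives $T(x) + T(\hat x) = nL$; together these force $T(x) = nL/2$ for every $x$.

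For the direction (b) $\Rightarrow$ (a), I would combine constant transmission $T(x) = nL/2$ with self-centeredness. Fixing $x$, self-centeredness supplies a vertex $\hat x$ with $d(x,\hat x) = L$. Summing the triangle inequality $d(x,y) + d(y,\hat x) \ge d(x,\hat x) = L$ over all $y$ gives $T(x) + T(\hat x) \ge nL$; but the left-hand side equals $nL/2 + nL/2 = nL$ exactly, so every one of these triangle inequalities is an equality. That is, $d(x,y) + d(y,\hat x) = L = d(x,\hat x)$ for all $y$, which is precisely $[x,\hat x] = V$, establishing antipodality.

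The individual steps are short, so I do not expect a genuine obstacle, but the point requiring the most care is the middle of (a) $\Rightarrow$ (b): constancy of $T$ does not follow merely from $T(x) + T(\hat x) = nL$, and I must supply the separate argument that the antipodal map is an isometry, hence $T(x) = T(\hat x)$, before concluding $T(x) = nL/2$. I would also pause to confirm that in the antipodal case the constant vector is genuinely the Steinerberger curvature, i.e. the max-min solution; this is immediate, since \eqref{eq:BM-simple} caps the minimal entry of every nonnegative solution at $2/L$, a value our constant solution attains.
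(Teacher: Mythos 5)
Your proposal is correct and follows essentially the same route as the paper: both reduce Bonnet--Myers sharpness to the constant-transmission condition $\sum_{y}d(x,y)=nL/2$ via the constant-curvature fact and the bound \eqref{eq:BM-simple}, and the (b) $\Rightarrow$ (a) direction (summing the triangle inequality and forcing equality) is identical to the paper's. The only cosmetic difference is in (a) $\Rightarrow$ (b), where the paper obtains $T(x)=nL/2$ by summing $d(x,y)+d(x,\hat y)=L$ over the antipodal pairs partitioning $V$, while you instead sum $d(\hat x,y)=L-d(x,y)$ over all $y$ and invoke the (easily verified) fact that the antipodal map is a bijective isometry to get $T(\hat x)=T(x)$ --- both are one-line consequences of $[x,\hat x]=V$.
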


\begin{proof}
    $(a) \Rightarrow (b)$: 
    Assume that $G$ is antipodal. Then for every $x \in V$ there exists $\hat x \in V$ such that $[x,\hat x] = V$. It is easy to see that $\hat x$ is unique and must have maximal distance to $x$. Since antipodal graphs are self-centered, we have also 
    $$ d(x,\hat x) = L := {\rm{diam}}(G). $$
    The pairings $\{ x,\hat x\}$ lead into a partition $V$ into two equal size disjoint sets $V = V_1 \cup V_2$ such that, for every $x \in V_1$, we have $\hat x \in V_2$ and vice versa. This implies for all $x \in V$,
    $$ \sum_{y \in V} d(x,y) = \sum_{y \in V_1} d(x,y) + d(x,\hat y) = \sum_{y \in V_1} L = \frac{n L}{2}. $$
    This shows that the constant vector is an eigenvector to the distance matrix. We therefore found a solution
    $$ D K = n \cdot \mathbf{1}_n $$
    with $K = \frac{2}{L} \mathbf{1}_n$. Since $\min_i \hat K_i \le 2/L$ for any solution $\hat K$ of \eqref{eq:K-defin}, $K$ is a solution whose minimal entry is equal to $2/L$ and, therefore, maximal. Consequently, all vertices of $G$ have Steinerberger curvature $2/L$ and $G$ is Bonnet-Myers sharp. 

    \medskip

    $(b) \Rightarrow (a)$: Now we assume that $G=(V,E)$ is self-centered and Bonnet-Myers sharp with $n = |V|$. Let $L := {\rm{diam}}(G)$. Then $G$ has constant Steinerberger curvature $K(z) = 2/L$ for all vertices $z \in V$. It follows from equation \eqref{eq:K-defin} that
    \begin{equation} \label{eq:BMsh-eq} 
    \frac{n L}{2} = \sum_{y \in V} d(z,y)
    \end{equation}
    for all $z \in V$.
    
    For $x \in V$, pick $\hat x$ with $d(x,\hat x) = L$. We need to show that $[x,\hat x] = V$. Applying \eqref{eq:BMsh-eq} twice and the triangle inequality, we conclude that
    $$ \frac{nL}{2} = \sum_{y \in V} d(x,y) \stackrel{(*)}{\ge} \sum_{y \in V} (L-d(\hat x,y)) = n L - \sum_{y \in V} d(\hat x,y) = nL - \frac{nL}{2} = \frac{nL}{2}. $$
    Therefore, the inequality $(*)$ is an equality and we have $d(x,y) + d(y,\hat x) = L = d(x,\hat x)$ for all $y \in V$. This finishes the proof of $[x,\hat x] = V$. 
\end{proof}

The following example shows that the condition of self-centeredness in $(b)$ of Theorem \ref{thm:BM-equiv} cannot be removed. 

\begin{ex}[Handa graph]
The following example was first considered in Handa \cite{H-99}, and it is a prominent example of a distance-balanced graph. \emph{Distance-balanced graphs} are finite connected combinatorial graphs $G=(V,E)$ with the following property for every pair $x,y \in V$ of adjacent vertices: The number of vertices in $G$ closer to $x$ agrees with the number of vertices closer to $y$ (see, e.g., \cite{IKM-10} and the references therein about this topic). 

\begin{figure}[h!]
\begin{center}
\includegraphics[width=0.54\textwidth]{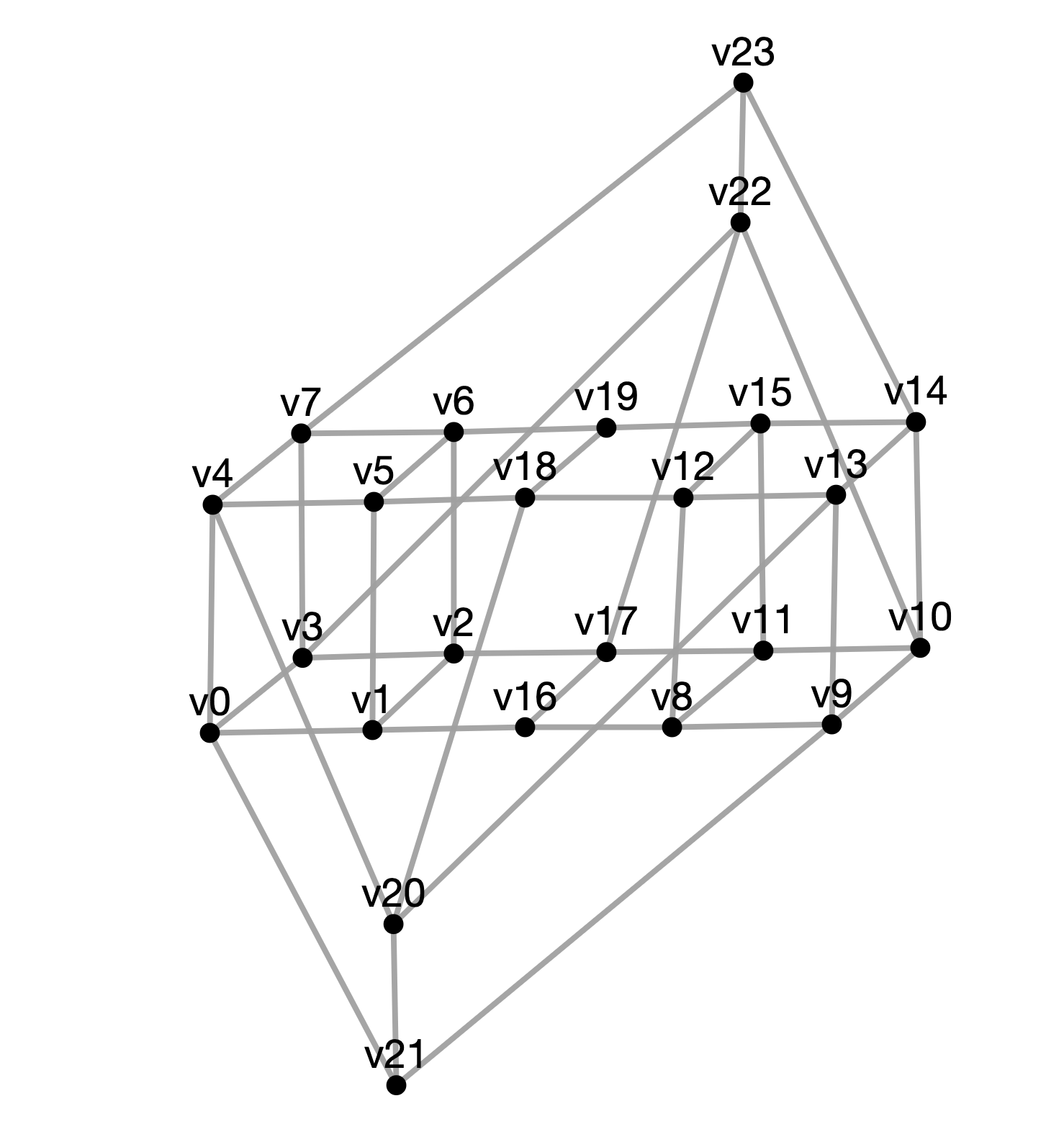}
\includegraphics[width=0.44\textwidth]{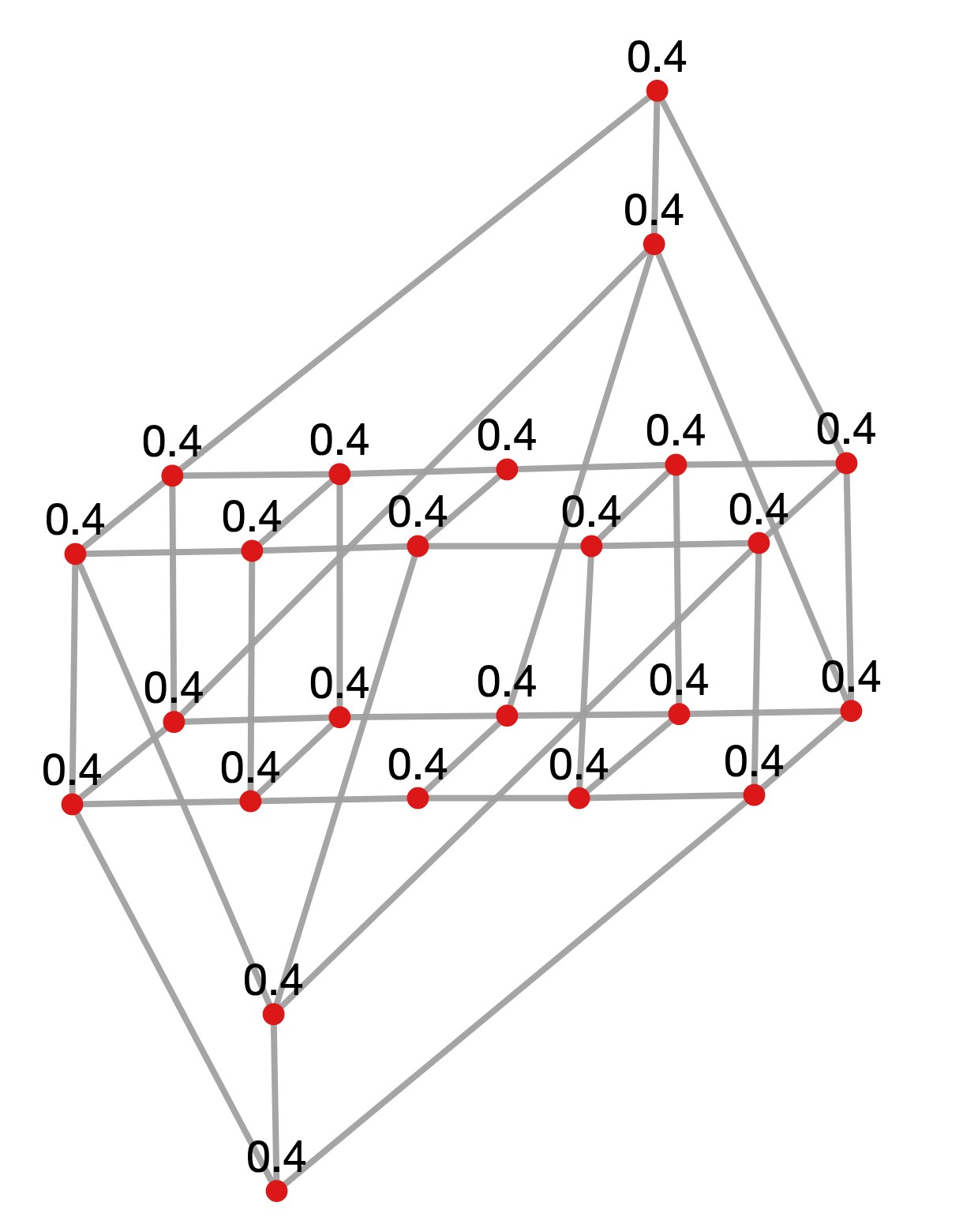}
\end{center}
\caption{The distance-balanced and Bonnet-Myers sharp Handa graph with vertex enumeration (left) and its Steinerberger curvatures (right)}
\label{fig:handa}
\end{figure}

The \emph{Handa graph} $G=(V,E)$ with
$$ V = \{v_0,v_1,\dots,v_{23}\} $$ 
is illustrated in Figure \ref{fig:handa}(left), and it is a non-regular bipartite distance-balanced graph of diameter ${\rm{diam}}(G) = 5$. The vertex pairs at distance $5$ apart are $\{v_i,v_{10-i}\}$ for $0 \le i \le 10$, $\{v_{17},v_{20}\}$ and $\{v_{18},v_{22}\}$. However, there are four vertices, $v_{16}, v_{19}, v_{21}, v_{23}$, which do not have any vertices at distance $5$. The Handa graph is Bonnet-Myers sharp with constant Steinerberger curvature $\frac{2}{5}$(see Figure \ref{fig:handa}(right), and the solution space of 
$$ D K = 24 \cdot \mathbf{1}_{24} $$
is $18$-dimensional. This is an example of a Bonnet-Myers sharp graph which is not self-centered and therefore also not antipodal. In this example, Steinerberger curvature differs from non-normalized Bakry-{\'E}mery curvature (with dimension parameter equals $\infty$) and Ollivier Ricci curvature $\kappa_p$ with idleness $p=0$, since both latter curvatures of this graph are non-positive, as illustrated in Figure \ref{fig:handa2}. For further background about Bakry-\'Emery curvature we refer readers to the original paper \cite{BE-85}, to \cite{El-91,LY-10,Schm-99} for other pioneering papers on this curvature notion, and to the more recent paper \cite{CLP-20} and its references for various properties of non-normalized Bakry-\'Emery curvature. We note however that the Lin-Lu-Yau version of Ollivier Ricci curvature, introduced in \cite{LLY-11} and related to Ollivier's original Ricci curvature $\kappa_p$ in \cite{Ol-09} via
$$ \kappa_{LLY}(x,y) = \lim_{p \to 1} \frac{\kappa_p(x,y)}{1-p} \stackrel{(*)}{=} 2 \kappa_{1/2}(x,y) $$ 
for adjacent vertices $x \sim y$ of the Handa graph has both signs. Note that the last identity $(*)$ is a consequence of \cite[Remark 5.4]{BCLMP-18}.

\begin{figure}[h!]
\begin{center}
\includegraphics[width=0.48\textwidth]{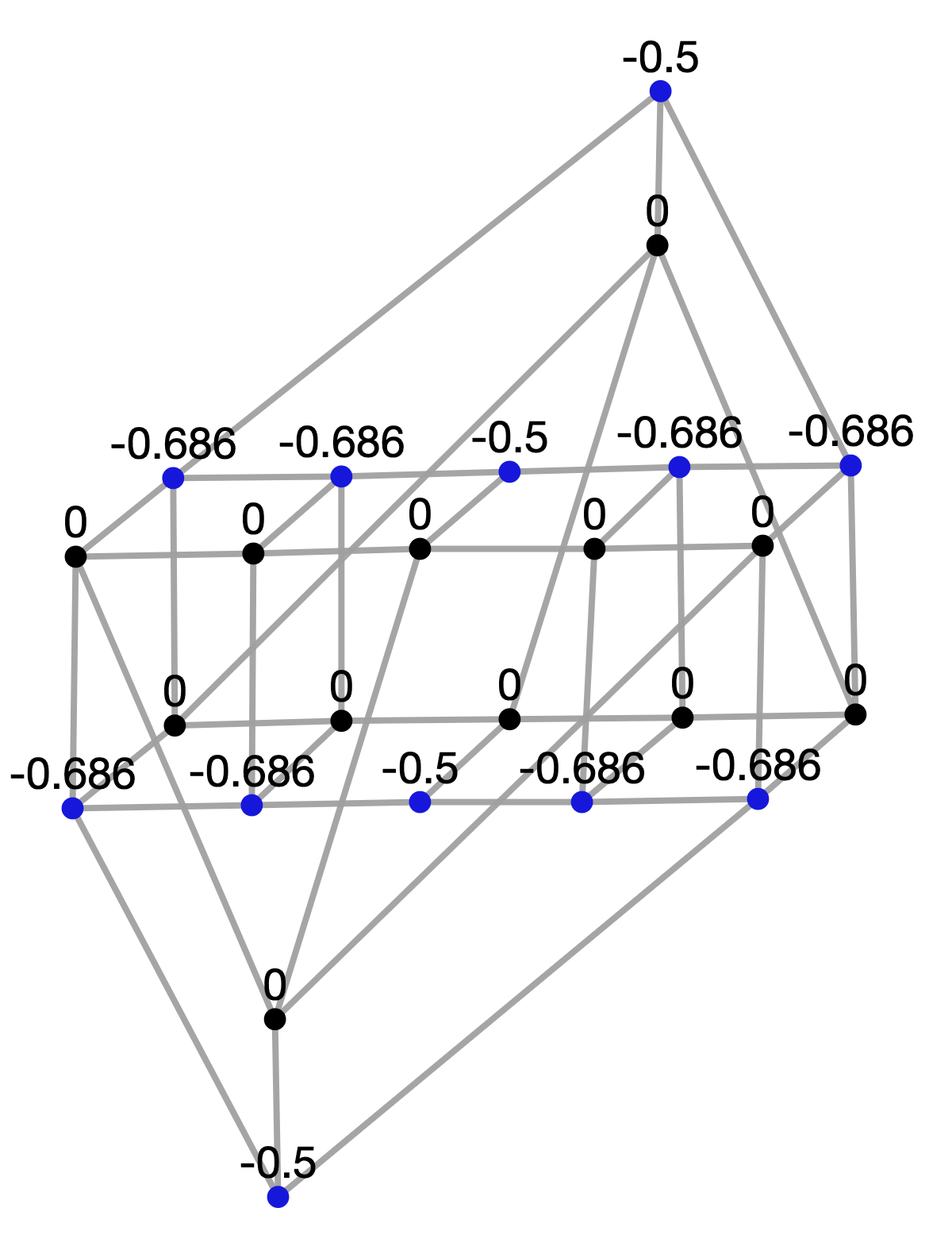}
\includegraphics[width=0.48\textwidth]{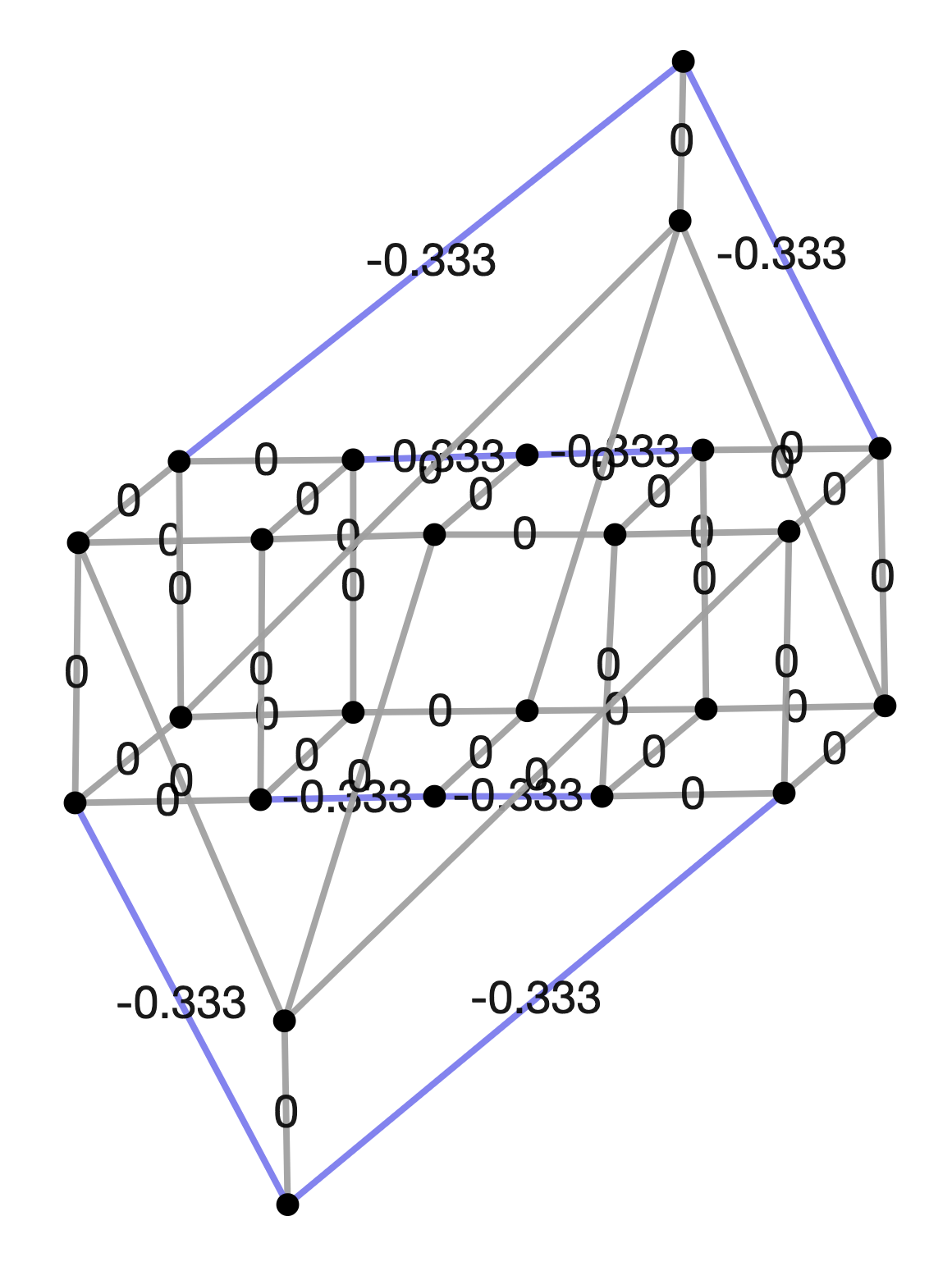}
\end{center}
\caption{The Handa graph has non-positive non-normalized Bakry-\'Emery curvature (left) and non-positive Ollivier Ricci curvature $\kappa_0$ (right).}
\label{fig:handa2}
\end{figure}
\end{ex}

The following remark mentions some similarities between Bonnet-Myers sharpness with respect to Steinerberger curvature and Bonnet-Myers sharpness with respect to Lin-Lu-Yau curvature $\kappa_{LLY}$. 

\begin{rmk} The Bonnet-Myers Theorem for graphs with non-negative Lin-Lu-Yau curvature has the same 
form, namely
\begin{equation} \label{eq:BM-sharp-Oll} 
{\rm{diam}}(G) \le \frac{2}{\min_{x \sim y} \kappa_{LLY}(x,y)}, 
\end{equation}
and we call a graph Ollivier Ricci Bonnet-Myers sharp, if \eqref{eq:BM-sharp-Oll} holds with equality. Regular Ollivier Ricci Bonnet-Myers sharp graphs  were considerd in \cite{CKKLMP-20} with the following results:
\begin{itemize}
\item[(a)] Regular Ollivier Ricci Bonnet-Myers sharpness is preserved under Cartesian product, provided a certain relation between their vertex degrees and their diameters is satisfied (Theorem 3.2 in \cite{CKKLMP-20});
\item[(b)] regular self-centered Ollivier Ricci Bonnet-Myers sharp have constant Lin-Lu-Yau curvature (Theorem 5.9 in \cite{CKKLMP-20});
\item[(c)] all regular self-centered Ollivier Ricci Bonnet-Myers sharp graphs are strongly spherical (Proposition 1.12 and Theorem 1.13).
\end{itemize}
We like to mention that strongly spherical graphs are graphs which are both antipodal and spherical (see \cite{KMS-04} and \cite{BHM-03} for the precise definitions), which means that the set of all regular self-centered Ollivier Ricci
Bonnet-Myers sharp graphs is a subset of all antipodal graphs and that they are therefore also all Bonnet-Myers sharp with respect to Steinerberger curvature, by Theorem \ref{thm:BM-equiv}. However, there are antipodal graphs which are not spherical, for example all cycles $C_{2n}$ of even length $2n \ge 6$. These graphs are Bonnet-Myers sharp with respect to Steinerberger curvature but not Ollivier Ricci Bonnet-Myers sharp (in fact, they have Lin-Lu-Yau curvature 0). Moreover, properties (a) and (c) of regular Ollivier Ricci Bonnet-Myers sharp graphs lead to a complete classification of regular self-centered Ollivier Ricci Bonnet-Myers sharp graphs (Theorem 1.6 in \cite{CKKLMP-20}). Note however that these results about Ollivier Ricci Bonnet-Myers sharp graphs are no longer valid if the regularity condition is dropped and there are many open problems about Ollivier Ricci Bonnet-Myers sharp graphs which are not self-centered and/or non-regular.
Kamtue \cite{K-20} showed that all regular Ollivier Ricci Bonnet-Myers sharp graphs of diameter $3$ are automatically self-centered. If one chooses non-normalized Lin-Lu-Yau curvature, M\"unch \cite{Mu-19} showed that regularity is no longer required for the classification of all self-centered Ollivier Ricci Bonnet-Myers sharp graphs. 
Recently, Cushing and Stone found some non-regular Ollivier Ricci Bonnet-Myers examples in the class of anti-trees (see \cite{CS-24}).
\end{rmk}

Next, we like to mention the following observation. (It was already mentioned in \cite{KMS-04} that any antipodal graph of diameter $2$ must be a Cocktail party graph.)

\begin{prop}
  The only Bonnet-Myers sharp graphs of diameter $2$ are the cocktail party graphs $CP(m)$ with $m \ge 2$. 
\end{prop}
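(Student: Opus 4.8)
The plan is to prove both implications through the constant-curvature characterization of Bonnet-Myers sharpness already established at the start of this section, combined with a short degree count. First I would observe that a Bonnet-Myers sharp graph $G=(V,E)$ of diameter $2$ has, by the proof of \eqref{eq:BM-simple}, constant Steinerberger curvature equal to $\frac{2}{\mathrm{diam}(G)} = 1$ at every vertex. Hence the constant vector $\mathbf{1}_n$ (with $n = |V|$) is the curvature-defining solution of \eqref{eq:K-defin}, which is equivalent to the statement $D \mathbf{1}_n = n \mathbf{1}_n$, i.e. every row sum of the distance matrix equals $n$.

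Next I would exploit that in a diameter-$2$ graph every off-diagonal distance is either $1$ or $2$. For a vertex $v_i$ of degree $d_i$ the corresponding row sum of $D$ equals $d_i \cdot 1 + (n-1-d_i)\cdot 2 = 2n-2-d_i$. Setting this equal to $n$ forces $d_i = n-2$ for every vertex. Thus each vertex is non-adjacent to exactly one other vertex, so the complement graph $\overline{G}$ is $1$-regular, i.e. a perfect matching. This in turn forces $n = 2m$ to be even and identifies $G$ as $K_{2m}$ with a perfect matching deleted, which is precisely the cocktail party graph $CP(m)$; connectedness and $\mathrm{diam}(G)=2$ rule out $m=1$, leaving $m \ge 2$.

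For the converse I would check that $CP(m)$ with $m \ge 2$ is indeed Bonnet-Myers sharp of diameter $2$. The quickest route is to verify antipodality: for a vertex $x$ and its unique non-neighbour $\hat x$ one has $d(x,\hat x)=2$, while every other vertex $y$ is adjacent to both $x$ and $\hat x$, so $[x,\hat x] = V$. Theorem \ref{thm:BM-equiv} then yields that $CP(m)$ is self-centered and Bonnet-Myers sharp. Alternatively one may argue directly: each vertex of $CP(m)$ has degree $n-2$, so the same row-sum computation gives $D\mathbf{1}_n = n\mathbf{1}_n$, whence $\mathbf{1}_n$ solves \eqref{eq:K-defin} with $\min_i K_i = 1 = \frac{2}{\mathrm{diam}(G)}$; since the inequality $\min_i K_i \le \frac{2}{\mathrm{diam}(G)}$ holds for every solution, $\mathbf{1}_n$ is the max-min solution and $CP(m)$ is Bonnet-Myers sharp.

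I do not anticipate a serious obstacle, since the argument is essentially a degree count once the constant curvature value $1$ is pinned down. The only points requiring slight care are confirming that the perfect-matching structure of $\overline{G}$ forces $n$ even and yields exactly the cocktail party graphs, and verifying that $\mathbf{1}_n$ is the curvature-defining solution (the one of largest minimal entry), which is immediate from the Bonnet-Myers inequality proved above.
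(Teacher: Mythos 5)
Your argument is essentially the same as the paper's: Bonnet-Myers sharpness forces constant curvature $1$, hence every row sum of $D$ equals $n$, and since all off-diagonal distances are $1$ or $2$ this forces exactly one vertex at distance $2$ from each vertex, identifying $G$ as $K_{2m}$ minus a perfect matching. Your explicit verification of the converse (that $CP(m)$ is indeed Bonnet-Myers sharp, e.g.\ via antipodality and Theorem \ref{thm:BM-equiv}) is a welcome addition that the paper leaves implicit.
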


\begin{proof}
    Bonnet-Myers sharpness of a graph $G=(V,E)$ of diameter $2$ implies, for any $x \in V$,
    $$ \sum_{y \in V} d(x,y) = n. $$
    Since $d(x,x) = 0$ and $d(x,y) \ge 1$ for all $y \neq x$, there must be exactly one vertex $y \in V$ with
    $d(x,y) = 2$ and all others need to be adjacent to $x$. Since this holds for every vertex $x \in X$, $G$ must be a complete graph $K_{2m}$ ($m \ge 2$) with a perfect matching removed and, therefore, the cocktail party graph $CP(m)$.   
\end{proof}

Let us finish this note with another straightforward relation between curvature and distance, involving the \emph{average distance function} of a connected graph $G=(V,E)$, given by
$$ d^\#(G) := \frac{1}{|V|^2} \sum_{x,y \in V} d(x,y). $$

\begin{prop} \label{prop:avdistcurv}
Let $G=(V,E)$ be a finite connected graph
admitting a solution $K: V \to \mathbb{R}_{\ge 0}$ for \eqref{eq:K-defin}. Then we have
\begin{equation}\label{eq:curvavdist} 
d^\#(G) \le \frac{1}{\min_x K(x)}. 
\end{equation}
Moreover, \eqref{eq:curvavdist} holds with equality if and only if the function $K$ is constant. 
\end{prop}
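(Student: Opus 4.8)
The plan is to obtain \eqref{eq:curvavdist} by averaging the defining equation \eqref{eq:K-defin} over all vertices, and then to read off the equality case directly from the resulting chain of inequalities.

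First I would rewrite \eqref{eq:K-defin} vertex-wise: writing $n = |V|$, for each $z \in V$ we have
$$ \sum_{y \in V} d(z,y) K(y) = n. $$
Summing this identity over all $z \in V$ and interchanging the order of summation gives
$$ n^2 = \sum_{z \in V} \sum_{y \in V} d(z,y) K(y) = \sum_{y \in V} K(y) \, T(y), $$
where $T(y) := \sum_{z \in V} d(z,y)$ denotes the transmission of $y$. By the definition of the average distance function we have $\sum_{y \in V} T(y) = \sum_{z,y \in V} d(z,y) = n^2 \, d^\#(G)$, so the weighted sum $\sum_y K(y) T(y)$ is a curvature-weighted version of the total distance sum.

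Next I would apply the pointwise bound $K(y) \ge K_0 := \min_x K(x) \ge 0$, using $T(y) \ge 0$, to estimate
$$ n^2 = \sum_{y \in V} K(y) \, T(y) \ge K_0 \sum_{y \in V} T(y) = K_0 \, n^2 \, d^\#(G). $$
Dividing by $n^2 > 0$ yields $K_0 \, d^\#(G) \le 1$, which is precisely \eqref{eq:curvavdist}. For the equality discussion, one observes that equality in the displayed chain holds if and only if $K(y) = K_0$ for every $y$ with $T(y) > 0$. Since $G$ is connected with $|V| \ge 2$ (the case $|V| = 1$ being degenerate), we have $T(y) \ge |V| - 1 > 0$ for all $y$, so equality forces $K \equiv K_0$, i.e.\ $K$ is constant. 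Conversely, if $K$ is the constant $K_0$, then the vertex-wise equation gives $T(z) = n/K_0$ for every $z$, whence $d^\#(G) = \tfrac{1}{n^2} \sum_z T(z) = 1/K_0$, so \eqref{eq:curvavdist} holds with equality.

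There is no deep obstacle here; the argument is essentially a single application of the weighted-average inequality after averaging \eqref{eq:K-defin}. The only point requiring a little care is the equality analysis: one must verify that $T(y) > 0$ for every vertex (which uses connectedness and $|V| \ge 2$) so that the equality condition genuinely pins down $K$ as constant, and then confirm the converse by the direct substitution indicated above.
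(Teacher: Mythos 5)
Your proposal is correct and follows essentially the same route as the paper: both sum the vertex-wise identity $\sum_{y} d(z,y)K(y) = n$ over all $z$ to get $n^2 = \sum_{x,y} K(x) d(x,y)$ and then bound $K$ below by its minimum. Your equality analysis is in fact slightly more careful than the paper's, since you explicitly verify that every transmission $T(y)$ is positive (using connectedness and $|V|\ge 2$) before concluding that equality forces $K$ to be constant.
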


\begin{proof}
    It follows from \eqref{eq:K-defin} that
    $$ \min_z K(z) \sum_{x,y \in V} d(x,y) \le \sum_{x,y \in V} K(x) d(x,y) = n^2. $$
    This proves \eqref{eq:curvavdist}. This inequality holds with equality if and only if 
    $$ \min_z K(z) = K(x) \quad \text{for all $x \in V$}, $$ that is, $K$ is the constant function.
\end{proof}

\begin{rmk}
    A much more involved result similar to Proposition \ref{prop:avdistcurv} in the context of Ollivier Ricci curvature can be found in \cite[Theorem 1.1]{Mu-22b}. Namely, the following inequality holds for all finite connected graphs $G=(V,E)$:
    \begin{equation} \label{eq:avdistavOllcurv} 
    d^\#(G) \le \frac{{\rm{Deg}}^\#(G)}{\kappa^\#(G)}. 
    \end{equation}
    Here $\rm{Deg}: V \to \mathbb{N}$ is the combinatorial vertex degree and ${\rm{Deg}}^\#(G)$ its average, that is
    $$ 
    {\rm{Deg}}^\#(G) = \frac{1}{|V|} \sum_{x \in V} {\rm{Deg}}(x). 
    $$
    The \emph{average curvature} $\kappa^\#(G)$ is a weighted average of the \emph{non-normalized} Ollivier Ricci curvature for pairs of adjacent vertices $x \sim y$, that is
    $$ \kappa^\#(G) = \frac{\sum_{e \in E} g(e){{\rm{Ric}}(e)}}{\sum_{e \in E} g(e)}. $$
    Here the non-normalized Ollivier Ricci curvature ${\rm{Ric}}(e)$ of an edge $e=\{x,y\}$ is defined via Kantorovich Duality as follows (see \cite[Theorem 2.1]{MW-19}:
    $$ {\rm{Ric}}(e) = \inf_{\Vert \nabla f\Vert_\infty =1 \atop f(y)-f(x)=1} \Delta f(x) - \Delta f(y), $$
    where $\Vert \nabla f\Vert_\infty = \sup_{x \sim y} |f(y)-f(x)|$ and $\Delta f(x) = \sum_{y \sim x} (f(y)-f(x))$ is the \emph{non-normalized} Laplacian. The weight $g(e)$ is the \emph{betweenness centrality} of the edge $e$, given by
    $$ g(e) = \frac{1}{|V|^2} \sum_{u,v \in V} {\rm{geod}}_{e}(u,v), $$
    where ${\rm{geod}}_e(u,v)$ is the quotient of the number of geodesics from $u \in V$ to $v \in V$ containing the edge $e$ and the number of all geodesics from $u$ to $v$. Of course, we have for $e=\{x,y\}$ that ${\rm{geod}}_{e}(x,y) = 1$, since all our graphs are assumed to be simple, that is, without multiple edges and without loops. For more details, we refer the readers to \cite{Mu-22b}. Moreover, that paper gives also a full classification of the graphs satisfying \eqref{eq:avdistavOllcurv} with equality, namely that they are precisely the reflective graphs, which are Cartesian products of cocktail party graphs, Johnson graphs, halved cubes, Schl\"afli and Gosset graphs (see also the predecessor paper \cite{Mu-22a}).
\end{rmk}

{\bf{Acknowledgements:}} We would like to thank Bertie and Blossom for providing us with inspiring mathematical thoughts regarding Theorem \ref{thm:bertie-blossom}. Moreover, we thank Pakawut Jiradilok for useful discussions. Shiping Liu is supported by the National Key R and D Program of China 2020YFA0713100 and the National Natural Science Foundation of China No. 12031017. Supanat Kamtue is supported
by Shuimu Scholar Program of Tsinghua University No. 2022660219.


\begin{thebibliography}{9}

\bibitem[Be-85]{BE-85} D. Bakry and M. \'{E}mery. Diffusions hypercontractives. In S\'{e}minaire de probabilit\'{e}s, {XIX}, 1983/84, Lecture Notes in Math. {\bf{1123}}, Springer, Berlin, 1985, pp. 177--206, 1985.

\bibitem[Ba-10]{Ba-10} R. B. Bapat. 
Distance matrix of a tree. In \emph{Graphs and matrices}, Universitext, 2nd ed., Springer, London, 2014, pp. 95--109.

\bibitem[BS-11]{BS-11} R. B. Bapat and S. Sivasubramanian. Inverse of the distance matrix of a block graph. \emph{Linear Multilinear Algebra} {\bf{59}}(12) (2011), 1393--1397.

\bibitem[BHM-03]{BHM-03} A. Berrachedi, I. Havel and H. M. Mulder. Spherical and clockwise spherical graphs.
\emph{Czechoslovak Math. J.} {\bf{53(128)}}(2) (2003), 295--309.

\bibitem[BCLMP-18]{BCLMP-18} D. P. Bourne, D. Cushing, Sh. Liu, F. M\"{u}nch and N. Peyerimhoff. Ollivier-{R}icci idleness functions of graphs. \emph{SIAM J. Discrete Math.} {\bf{32}}(2) (2018), 1408--1424.

\bibitem[ChT-23]{ChT-23} W.-Ch. Chen and M.-P. Tsui. On Steinerberger curvature and graph distance matrices. \emph{arXiv:2309.1615}, 28 Sep 2023.

\bibitem[CKKLMP-20]{CKKLMP-20} D. Cushing, S. Kamtue, J. Koolen, Sh. Liu, F. M\"{u}nch and N. Peyerimhoff. Rigidity of the {B}onnet-{M}yers inequality for graphs with respect to {O}llivier {R}icci curvature. \emph{Adv. Math.} {\bf{369}} (2020), Article 107188.

\bibitem[CKLLS-22]{CKLLS-22} D. Cushing, R. Kangaslampi, V. Lipi\"{a}inen, Sh, Liu and G. W. Stagg. The graph curvature calculator and the curvatures of cubic graphs. \emph{Exp. Math.} {\bf{31}}(2) (2022), 583--595.

\bibitem[CLP-20]{CLP-20} D. Cushing, Sh. Liu and N. Peyerimhoff. Bakry-\'{E}mery curvature functions on graphs. 
\emph{Canad. J. Math.} {\bf{72}}(1) (2020), 89--143

\bibitem[CS-24]{CS-24} D. Cushing and A. Stone. A note on non-regular Bonnet-Myers sharp graphs. Preprint 2024.

\bibitem[DOS-224]{DOS-24} K. Devriendt, A. Ottolini and S. Steinerberger. Graph curvature via resistance distance. \emph{Discrete Appl. Math.} {\bf{348}} (2024), 68--78.

\bibitem[DL-22]{DL-22} K. Devriendt and R. Lambiotte. Discrete curvature on graphs from the effective resistance. \emph{J. Phys.: Complexity} {\bf{3}} (2022), Article 025008.

\bibitem[El-91]{El-91} K. D. Elworthy. Manifolds and graphs with mostly positive curvatures. In \emph{Stochastic analysis and applications ({L}isbon, 1989)}, Progr. Probab. {\bf{26}}, Birkh\"{a}user Boston, Boston, MA, 1991, pp. 96--110.
   
\bibitem[GL-78]{GL-78} R. L. Graham and L. Lov\'{a}sz. Distance matrix polynomials of trees. \emph{Adv. in Math.} {\bf{29}}(1) (1978), 60--88.

\bibitem[H-99]{H-99} K. Handa. Bipartite graphs with balanced {$(a,b)$}-partitions. \emph{Ars Combin.} {\bf{51}} (1999), 113--119.

\bibitem[IKM-10]{IKM-10} A. Ili\'{c}, S. Klav{\u{z}}ar and M. Milanovi\'{c}. On distance-balanced graphs.
\emph{European J. Combin.} {\bf{31}}(3) (2010), 733--737.

\bibitem[K-20]{K-20} S. Kamtue. Bonnet-Myers sharp graphs of diameter three. \emph{arXiv:2005.06704}, 14 May 2020.

\bibitem[KMS-04]{KMS-04} J. H. Koolen, V. Moulton and D. Stevanovi\'{c}. The structure of spherical graphs.  \emph{European J. Combin.},{\bf{25}}(2) (2004), 299-310.

\bibitem[LLY-11]{LLY-11} Y. Lin, L. Lu  S.-T. Yau. Ricci curvature of graphs. \emph{Tohoku Math. J.} {\bf{63}}(4) (2011), 605--627.

\bibitem[LY-10]{LY-10} Y. Lin and S.-T. Yau. Ricci curvature and eigenvalue estimate on locally finite graphs. \emph{Math. Res. Lett.} {\bf{17}}(2) (2010), 343--356.

\bibitem[Mu-19]{Mu-19} F. M\"unch. Discrete Ricci curvature, diameter bounds and rigidity. PhD-Thesis, Potsdam University, Germany.

\bibitem[Mu-22a]{Mu-22a} F. M\"unch. Reflective Graphs, Ollivier curvature, effective diameter, and rigidity. \emph{arXiv:2205.15857}, 31 May 2022.

\bibitem[Mu-22b]{Mu-22b} F. M\"unch. Ollivier curvature, betweenness centrality and average distance. \emph{arXiv:2209.15564}, 30 Sep 2022.

\bibitem[MW-19]{MW-19} F. M\"{u}nch and R. K. Wojciechowski. Ollivier {R}icci curvature for general graph {L}aplacians: heat equation, {L}aplacian comparison, non-explosion and diameter bounds. \emph{Adv. Math.} {\bf{356}} (2019), Article 106759.

\bibitem[Ol-09]{Ol-09} Y. Ollivier. Ricci curvature of {M}arkov chains on metric spaces. \emph{J. Funct. Anal.} {\bf{256}}(3) (2009), 810--864.

\bibitem[Sa-22]{Sa-22} J. Salez. Sparse expanders have negative curvature. \emph{Geom. Funct. Anal.} {\bf{32}}(6) (2022), 1486--1513.

\bibitem[Schm-99]{Schm-99} M. Schmuckenschl\"{a}ger. Curvature of nonlocal {M}arkov generators. In
\emph{Convex geometric analysis ({B}erkeley, {CA}, 1996)}, Math. Sci. Res. Inst. Publ. {\bf{34}}, Cambridge Univ. Press, Cambridge, 1999, pp. 189--197.

\bibitem[St-23]{St-23} S. Steinerberger. Curvature on graphs via equilibrium measures. \emph{J. Graph Theory} {\bf{103}}(3) (2023), 415--436.

\end{thebibliography}
\end{document}